\newtheorem{theorem}{Theorem}[section]
\newtheorem{lemma}[theorem]{Lemma}
\theoremstyle{definition}
\newtheorem{definition}[theorem]{Definition}
\newtheorem{conjecture}[theorem]{Conjecture}
\theoremstyle{remark}
\newtheorem{remark}[theorem]{Remark}
\numberwithin{equation}{section}
\newcommand{\bdot}{\boldsymbol{\cdot}}
\begin{document}

\title[Erd\H{o}s-Ginzburg-Ziv theorem and Noether number for $C_m\ltimes_{\varphi} C_{mn}$]
{Erd\H{o}s-Ginzburg-Ziv theorem and Noether number for $C_m\ltimes_{\varphi} C_{mn}$}

\begin{abstract}
Let $G$ be a multiplicative finite group and $S=a_1\bdot\ldots\bdot a_k$ a sequence over $G$. We call $S$ a product-one sequence if $1=\prod_{i=1}^ka_{\tau(i)}$ holds for some permutation $\tau$ of $\{1,\ldots,k\}$. The small Davenport constant $\mathsf d(G)$ is the maximal length of a product-one free sequence over $G$. For a subset $L\subset \mathbb N$, let $\mathsf s_L(G)$ denote the smallest $l\in\mathbb N_0\cup\{\infty\}$ such that every sequence $S$ over $G$ of length $|S|\ge l$ has a product-one subsequence $T$ of length $|T|\in L$. Denote $\mathsf e(G)=\max\{\text{ord}(g): g\in G\}$. Some classical product-one (zero-sum) invariants including $\mathsf D(G):=\mathsf s_{\mathbb N}(G)$ (when $G$ is abelian), $\mathsf E(G):=\mathsf s_{\{|G|\}}(G)$, $\mathsf s(G):=\mathsf s_{\{\mathsf e(G)\}}(G)$, $\eta(G):=\mathsf s_{[1,\mathsf e(G)]}(G)$ and $\mathsf s_{d\mathbb N}(G)$ ($d\in\mathbb N$) have received a lot of studies. The Noether number $\beta(G)$ which is closely related to zero-sum theory is defined to be the maximal degree bound for the generators of the algebra of polynomial invariants. Let $G\cong C_m\ltimes_{\varphi} C_{mn}$, in this paper, we prove that
$$\mathsf E(G)=\mathsf d(G)+|G|=m^2n+m+mn-2$$ and $\beta(G)=\mathsf d(G)+1=m+mn-1$. We also prove that $\mathsf s_{mn\mathbb N}(G)=m+2mn-2$ and provide the upper bounds of $\eta(G)$, $\mathsf s(G)$. Moreover, if $G$ is a non-cyclic nilpotent group and $p$ is the smallest prime divisor of $|G|$, we prove that
$\beta(G)\le \frac{|G|}{p}+p-1$ except if $p=2$ and $G$ is a dicyclic group, in which case $\beta(G)=\frac{1}{2}|G|+2$.
\end{abstract}

\author{Dongchun Han}
\address{Department of Mathematics, Southwest Jiaotong University, Chengdu 610000, P.R. China}
\email{han-qingfeng@163.com}
\author{Hanbin Zhang}
\address{Academy of Mathematics and Systems Science, Chinese Academy of Sciences, Beijing
100190, P.R. China}
\email{nkuzhanghanbin@163.com}

\keywords{}
\maketitle

\section{Introduction}

Let $G$ be a multiplicative finite group. By a sequence over $G$, we mean a finite sequence of terms from $G$ which is unordered and repetition of terms allowed. We say that $S$ is a product-one sequence if its terms can be ordered so that their product equals 1, the identity of $G$. The small Davenport constant, denoted by $\mathsf d(G)$, is the maximal length of a product-one free sequence over $G$.
For a subset $L\subset \mathbb N$, let $\mathsf s_L(G)$ denote the smallest $l\in\mathbb N_0\cup\{\infty\}$ such that every sequence $S$ over $G$ of length $|S|\ge l$ has a product-one subsequence $T$ of length $|T|\in L$.
Denote $\mathsf e(G)=\max\{\text{ord}(g): g\in G\}$. Some classical examples of product-one (zero-sum) invariants including $\mathsf D(G):=\mathsf s_{\mathbb N}(G)$ (when $G$ is abelian), $\mathsf E(G):=\mathsf s_{\{|G|\}}(G)$, $\mathsf s(G):=\mathsf s_{\{\mathsf e(G)\}}(G)$, $\eta(G):=\mathsf s_{[1,\mathsf e(G)]}(G)$ and $\mathsf s_{d\mathbb N}(G)$ ($d\in\mathbb N$) have received a lot of studies, see \cite{GG} for a survey.

In 1961, Erd\H{o}s, Ginzburg and Ziv {\rm\cite{EGZ}} showed that $\mathsf E(G)\leq 2|G|-1$ for every finite solvable group $G$ and which implies that $\mathsf E(G)=\mathsf d(G)+|G|=2|G|-1$ for every finite cyclic group $G$. This result is well known as the Erd\H{o}s-Ginzburg-Ziv theorem. In 1984, Yuster and Peterson \cite{YP} showed that when $G$ is a non-cyclic solvable group, then $\mathsf E(G)\le 2|G|-2$. Later Yuster \cite{Y} improved the result to $\mathsf E(G)\le 2|G|-r$ provided that $|G|\ge 600((r-1)!)^2$. In 1996, Gao \cite{Gao2} improved the bound to $\mathsf E(G)\le \frac{11|G|}{6}-1$. Later in 2009 Gao and Li \cite{GLi} proved that $\mathsf E(G)\le \frac{7|G|}{4}-1$ and they conjectured that $\mathsf E(G)\le \frac{3|G|}{2}-1$ for any finite non-cyclic group.

When $G$ is abelian, Gao \cite{Gao1} proved the fundamental relation
$$\mathsf E(G)=\mathsf d(G)+|G|.$$

For a weighted version of this formula, we refer to the Chapter 16 of \cite{Gry}. Later, Zhuang and Gao \cite{ZG} proposed the following conjecture.

\begin{conjecture}\label{con1}
For every finite group $G$, $\mathsf E(G)=\mathsf d(G)+|G|$.
\end{conjecture}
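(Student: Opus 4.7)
The inequality $\mathsf{E}(G) \ge \mathsf{d}(G) + |G|$ is the easy half and I would dispose of it first. Let $S$ be a product-one free sequence of length $\mathsf{d}(G)$, and form the sequence $S \cdot 1_G^{[|G|-1]}$ of length $\mathsf{d}(G) + |G| - 1$. A product-one subsequence of length exactly $|G|$ would have to use at least one term of $S$ (there are only $|G|-1$ available copies of $1_G$), and deleting the identity terms from it would leave a nonempty product-one subsequence of $S$, contradicting the choice of $S$. Hence $\mathsf{E}(G) > \mathsf{d}(G) + |G| - 1$, which is the desired lower bound.

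For the reverse inequality I would induct on $|G|$, with Gao's theorem on abelian groups as the base. Pick a proper normal subgroup $N \triangleleft G$ (for instance $N = G'$ when $G$ is not perfect; for perfect non-abelian $G$ one must either restrict at the outset to solvable $G$ via Feit--Thompson, or treat non-abelian simple composition factors separately by hand). Let $q = |G/N|$ and $n = |N|$. Given $S$ over $G$ of length $\mathsf{d}(G) + |G|$, project to $\bar S$ over $G/N$ and, invoking the inductive identity $\mathsf{E}(G/N) = \mathsf{d}(G/N) + q$, iteratively extract product-one subsequences of $\bar S$ of length exactly $q$. Each extracted block lifts to a length-$q$ block in $G$ whose product in some fixed order lies in $N$; collecting those products gives a sequence $S^\flat$ over $N$. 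Once $|S^\flat| \ge \mathsf{E}(N) = \mathsf{d}(N) + n$, the inductive hypothesis on $N$ produces a product-one subsequence of $S^\flat$ of length $n$, and concatenating the corresponding blocks in $S$ (each in the fixed order that lifts to $N$) yields a product-one subsequence of $S$ of length $nq = |G|$.

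A routine count shows this extraction strategy succeeds provided $\mathsf{d}(G) + |G| \ge q(\mathsf{d}(N) + n) + \mathsf{d}(G/N)$, which after cancelling $qn = |G|$ collapses to the super-additivity
\[
\mathsf{d}(G) \;\ge\; q \cdot \mathsf{d}(N) + \mathsf{d}(G/N).
\]
This is the central obstacle and, in my view, the principal reason Conjecture~\ref{con1} remains open. Even in the abelian case only the weaker additivity $\mathsf{d}(G) \ge \mathsf{d}(N) + \mathsf{d}(G/N)$ is generally available; for non-abelian $N$ or $G/N$ very little is known, and super-additivity by the factor $q$ can fail without further hypotheses on the extension. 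My plan would therefore be twofold: first, to establish the super-additive form for convenient classes of extensions (when $G/N$ is cyclic, so $\mathsf{d}(G/N) = q-1$; when the extension splits; or when $N$ is a well-behaved nilpotent kernel), and second, to choose a normal series $1 \triangleleft G_1 \triangleleft \cdots \triangleleft G_r = G$ so that the partial inequalities compound correctly along the chain.

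The hardest case I expect is the non-abelian simple composition factors, where the quotient-and-lift scheme degenerates and one must exploit ordering freedom directly in $G$. For these I would try to adapt the density-type arguments behind Gao--Li's bound $\mathsf{E}(G) \le \tfrac{7}{4}|G| - 1$, paired with explicit constructions producing long product-one free sequences in the simple groups, aiming to squeeze $\mathsf{E}(G)$ into the single-value window $\mathsf{d}(G) + |G|$ predicted by the conjecture.
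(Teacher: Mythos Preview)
The statement you are attempting to prove is Conjecture~\ref{con1}, which the paper presents as an \emph{open problem} due to Zhuang and Gao and does \emph{not} prove in general. There is therefore no proof in the paper to compare your proposal against. Your lower-bound argument for $\mathsf E(G)\ge\mathsf d(G)+|G|$ is correct and is exactly the standard one (recorded in the paper as Lemma~\ref{lemma1}, citing \cite{ZG}).

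Your upper-bound strategy, however, is not a proof but an outline of why the problem is hard, and you say so yourself. The decisive gap is the super-additivity inequality $\mathsf d(G)\ge q\cdot\mathsf d(N)+\mathsf d(G/N)$ that your block-extraction count requires: this is false already in the simplest cases, for instance $G=C_p\times C_p$ with $N\cong C_p$, where it would demand $2p-2\ge p(p-1)+(p-1)$. Since the scheme fails even for groups where the conjecture is a theorem (Gao's abelian case), no refinement of the normal series alone can rescue it; one genuinely needs extra structural input beyond the inductive bookkeeping you describe.

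For context, what the paper actually does is verify the conjecture for the specific family $G\cong C_m\ltimes_\varphi C_{mn}$ (Theorem~\ref{theorem1}). The method there is superficially similar to your quotient-and-lift scheme---one passes to $G/N\cong C_m\ltimes_\varphi C_m$ and extracts length-$m$ blocks---but it sidesteps the super-additivity trap entirely: the upper bound $\mathsf E(G)\le|G|+|G|/m+m-2$ is proved \emph{without} reference to $\mathsf d(G)$, using instead the precise $\mathsf s$- and $\eta$-type bounds of Lemma~\ref{lemma7} together with the structural Lemma~\ref{lemma4} on cyclic groups to handle the residual terms. Separately, an explicit product-one free sequence of length $m+mn-2$ gives $\mathsf d(G)\ge m+mn-2$, and the two bounds then squeeze $\mathsf E(G)$ and $\mathsf d(G)$ simultaneously. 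That two-sided squeeze, rather than an inductive comparison of $\mathsf d$ along a normal series, is the mechanism the paper relies on.
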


Conjecture \ref{con1} attracts a lot of attention. Zhuang and Gao {\rm\cite{ZG}} verified the conjecture for dihedral groups of order $2p$ where $p\ge4001$ is a prime. Gao and Lu {\rm\cite{GLu}} improved the result to dihedral groups of order $2n$ for all $n\ge23$. Let $C_m\ltimes_{\varphi} C_n$ denote any semidirect product of a normal cyclic subgroup of order $n$ and a subgroup of order $m$, with any $\varphi:C_m\rightarrow$ Aut$(C_n)$ being a group homomorphism. J. Bass {\rm\cite{Bass}} extended the method of Gao and Lu to prove the conjecture for all dihedral groups, dicyclic groups and $C_p\ltimes_{\varphi} C_q$, where $p$, $q$ are primes. The first author \cite{Han} verified the conjecture when $G\cong C_p\ltimes_{\varphi} C_{pn}$, where $p$ is a prime, $n$ is a positive integer. He also verified the above conjecture of Gao and Li for non-cyclic nilpotent groups.

In this paper, we prove the following result.
\begin{theorem} \label{theorem1}
Let $G$ be a finite group, and $m$ be any positive integer. If $G$ has a normal subgroup $N$ such that $G/N\cong C_m\ltimes_{\varphi} C_m$, then
$$|G|+\mathsf d(G)\le \mathsf E(G)\le |G|+\frac{|G|}{m}+m-2.$$
In particular, if $G\cong C_m\ltimes_{\varphi} C_{mn}$, where $m,n$ are positive integers, then
$$\mathsf E(G)=|G|+\mathsf d(G)=m^2n+mn+m-2.$$
As a consequence, we have $$\mathsf d(G)=mn+m-2.$$
\end{theorem}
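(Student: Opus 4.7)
The plan is to split the theorem into three parts: (i) the easy lower bound $|G| + \mathsf d(G) \le \mathsf E(G)$, (ii) the quantitative upper bound $\mathsf E(G) \le |G| + |G|/m + m - 2$ obtained via the quotient $Q = G/N \cong C_m \ltimes_\varphi C_m$, and (iii) the specialization to $G \cong C_m \ltimes_\varphi C_{mn}$, where an explicit construction supplies $\mathsf d(G) \ge mn + m - 2$ and all inequalities collapse.

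The lower bound in (i) is the standard one recalled in the introduction: to a product-one free sequence $T$ of maximal length $\mathsf d(G)$ append $|G|-1$ copies of $1_G$. The resulting sequence has length $|G| + \mathsf d(G) - 1$; any would-be product-one subsequence of length exactly $|G|$ would have to use at least one element of $T$ (since only $|G|-1$ copies of the identity are available) and would therefore reduce to a nontrivial product-one subsequence of $T$, contradicting the choice of $T$.

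For the upper bound (ii), write $t = |N|$, so $|G| = m^2 t$. Given $S$ over $G$ with $|S| = m^2 t + m t + m - 2$, project to $Q$ and iteratively extract short product-one subsequences $\bar T_1, \bar T_2, \ldots$ of $\bar S$ in $Q$; each $T_i$, after a suitable reordering in $G$, has product in $N$, and one collects enough of them to obtain a subsequence of $S$ of length exactly $|G|$ whose product lies in $N$. A final zero-sum adjustment inside $N$ (using that the list of $N$-products itself sits inside a long sequence) yields product one in $G$. The decisive inputs are invariants of $Q = C_m \ltimes_\varphi C_m$: one first establishes $\mathsf d(Q) = 2m - 2$ directly from the semidirect structure, and then leverages the derived bounds on $\eta(Q)$ and $\mathsf s_{m\mathbb N}(Q)$ (both of order $3m-2$) which govern how short the extracted pieces can be forced to be. The constant $mt + m - 2 = |G|/m + m - 2$ is exactly what this bookkeeping sustains, and I expect this to be the main obstacle: both pinning down the sharp invariants for $Q$, and pushing the extraction-plus-reshuffle counting to cover precisely $|G|$ elements without slack.

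Finally for (iii), take $N$ to be the unique subgroup of order $n$ inside the normal $C_{mn}$; it is characteristic in $C_{mn}$, hence normal in $G$, and $G/N \cong C_m \ltimes_{\tilde\varphi} C_m$. Applying (ii) yields $\mathsf E(G) \le m^2 n + mn + m - 2$. For the matching lower bound on $\mathsf d(G)$, exhibit the sequence consisting of $m - 1$ copies of a generator $a$ of the complementary $C_m$ together with $mn - 1$ copies of a generator $b$ of $C_{mn}$: any product-one subsequence has image in $G/C_{mn} \cong C_m$ with $a$-exponent a multiple of $m$, forcing zero $a$'s, after which no nonempty product of at most $mn-1$ copies of $b$ can equal $1$ in $C_{mn}$. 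Hence $\mathsf d(G) \ge mn + m - 2$, and combined with (i) this gives $\mathsf E(G) \ge m^2 n + mn + m - 2$, collapsing all inequalities to equalities and yielding $\mathsf d(G) = mn + m - 2$.
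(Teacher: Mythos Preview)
Your outline is the paper's approach: parts (i) and (iii) are correct and essentially identical to what the paper does. The gap is in (ii), where your ``final zero-sum adjustment inside $N$'' conceals the actual crux of the argument.

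First, the extraction step does not run on $\eta(Q)$ or $\mathsf s_{m\mathbb N}(Q)$ but on the length-exactly-$m$ invariant: every sequence of length $4m-3$ over $Q=C_m\ltimes_\varphi C_m$ has a product-one subsequence of length exactly $m$. This is not read off from $\mathsf d(Q)=2m-2$; it requires an independent inductive proof over the prime factorization of $m$, using that $Q$ has a normal $C_p\times C_p$ (for $p$ the largest prime divisor) with quotient again of the same shape. Second, and more seriously, repeating this extraction on $|S|=|G|+|G|/m+m-2$ elements yields only $r=|G|/m+|N|-2$ blocks $T_i$ of length $m$ with $\sigma(T_i)\in N$, leaving a residue of length $3m-2$. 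You now need a product-one subsequence of exactly $|G|/m$ of the $\sigma(T_i)$, but $r$ is \emph{one short} of the Erd\H{o}s--Ginzburg--Ziv threshold $|G|/m+|N|-1$ when $N$ is cyclic. The paper closes this by a case split: if $N$ is non-cyclic, a Gao--Li bound for solvable groups suffices; if $N$ is cyclic, one applies $\mathsf s_{m\mathbb N}(Q)\le 3m-2$ to the residue to gain either one more length-$m$ block (and EGZ then applies directly) or a single length-$2m$ block, in which case a structural lemma characterizing sequences of length $k|N|+|N|-2$ over $C_{|N|}$ without a product-one subsequence of length $k|N|$ (and showing that $\prod_{k|N|-2}$ is then all of $C_{|N|}$) is invoked to absorb the $2m$-block. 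This last maneuver---the one-short count and the two-pronged rescue---is the missing idea, and it is not bookkeeping.
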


Next, we are going to investigate $\mathsf s(G)$, $\eta(G)$ and $\mathsf s_{d\mathbb N}(G)$ ($d\in\mathbb N$).
Note that if $G$ is nilpotent, then $G$ is the direct product of its $p$-Sylow subgroups and hence $\mathsf e(G)=\text{lcm}\{\text{ord}(g):g\in G\}$. If $G$ is abelian, then $\mathsf e(G)=\exp(G)$ which is called the exponent of $G$. When $G$ is abelian, $\eta(G)$ and $\mathsf s(G)$ have received a lot of studies since the 1960s. In \cite{Gao3}, Gao conjectured that $\mathsf s(G)=\eta(G)+\mathsf e(G)-1$ holds for all abelian groups and he verified this conjecture for all group with $\exp(G)\le 4$. For some main results on $\mathsf s(G)$ and $\eta(G)$, see \cite[Sections 5.7 and 5.8]{GH}  and \cite{GHZ} for a recent progress. For non-abelian groups we refer to \cite[Sections 2.5 and 3.3]{CDG}. The study of $\mathsf s_{d\mathbb N}(G)$ ($d\in\mathbb N$) was proposed in \cite{GGS} and some results about abelian groups were obtained.

In this paper, we study $\eta(G)$, $\mathsf s(G)$ and $\mathsf s_{mn\mathbb N}(G)$ for the group $C_m\ltimes_{\varphi} C_{mn}$ and prove the following result.

\begin{theorem}\label{theorem2}
Let $G\cong C_m\ltimes_{\varphi} C_{mn}$. We have
$$\mathsf s_{mn\mathbb N}(G)=m+2mn-2.$$
If $\mathsf e(G)=mn$, then we have
$$\eta(G)\le 2m+mn-2\text{ and }\mathsf s(G)\le 2m+2mn-3.$$
\end{theorem}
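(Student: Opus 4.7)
The plan is to handle all three bounds through the normal subgroup $N\cong C_{mn}$ of $G$, with quotient map $\pi\colon G\to G/N\cong C_m$. The common mechanism is to strip off length-$m$ blocks of $S$ whose $\pi$-images are Erd\H{o}s--Ginzburg--Ziv zero-sum in $C_m$, so that each such block has product in $N$, and then reduce to a product-one problem inside the cyclic group $N$.

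For the lower bound $\mathsf{s}_{mn\mathbb{N}}(G)\ge m+2mn-2$, I would take a product-one free sequence $T$ of maximal length $\mathsf{d}(G)=m+mn-2$ (existence guaranteed by Theorem~\ref{theorem1}) and append $mn-1$ copies of $1_G$. After deleting the identity terms, any product-one subsequence of the concatenation becomes a product-one subsequence of $T$, hence empty; so every product-one subsequence consists only of identities and has length at most $mn-1$, which is not a positive multiple of $mn$.

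For the upper bound $\mathsf{s}_{mn\mathbb{N}}(G)\le m+2mn-2$, given $S$ of length $m+2mn-2$, I would iterate $\mathsf{s}(C_m)=2m-1$ on $\pi(S)$: while the untouched portion of $S$ has at least $2m-1$ terms, extract $m$ terms whose $\pi$-images sum to zero in $C_m$. A direct count shows this yields exactly $2n-1$ disjoint length-$m$ blocks $T_1,\ldots,T_{2n-1}$ (plus $2m-2$ residual terms), each with product in $N$. The task becomes to select a subcollection $I$ with $|I|\in n\mathbb{N}$ (so the union has length $|I|m\in mn\mathbb{N}$) and internal orderings so that the total product equals $1_G$. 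Reducing the block products modulo the order-$m$ subgroup of $N\cong C_{mn}$ gives a length-$(2n-1)$ sequence over $C_n$, and EGZ in $C_n$ locates $n$ blocks whose combined product lies in that order-$m$ subgroup; this remainder should then be absorbed by exploiting the freedom of internal reorderings of the chosen blocks. The bounds $\eta(G)\le 2m+mn-2$ and $\mathsf{s}(G)\le 2m+2mn-3$ would follow from the same extraction procedure with shorter and longer input lengths, respectively: for $\eta(G)$ one extracts $n$ blocks of total length $\mathsf{e}(G)=mn$ and argues product-one directly; for $\mathsf{s}(G)$ one extracts enough blocks to select $n$ with combined product $1_G$, yielding a product-one subsequence of length exactly $mn$.

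The main obstacle is controlling how the product of a union of blocks depends on both the internal ordering of each block and the interleaving of the chosen blocks. In the non-abelian setting, conjugation by the $C_m$-generator twists the $\beta$-contributions, so the set of realizable products of a fixed collection of blocks forms a coset of a subgroup of $N$ whose size must be analyzed carefully. The hardest step will be to show that this realizable set is large enough inside $N\cong C_{mn}$ to hit the identity for some valid choice of $n$ blocks, most likely via a direct computation using the explicit semidirect product relations $\alpha\beta\alpha^{-1}=\beta^{k}$ together with an inductive or combinatorial argument on $n$ in the spirit of the techniques used by Gao--Lu, Bass, and the first author in the dihedral, dicyclic, and $C_p\ltimes_{\varphi} C_{pn}$ cases.
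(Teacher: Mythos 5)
Your lower-bound argument for $\mathsf s_{mn\mathbb N}(G)\ge m+2mn-2$ is correct and is essentially the paper's construction (the paper pads the explicit product-one free sequence $x^{[m-1]}\bdot y^{[mn-1]}$ with $1^{[mn-1]}$), and it is not circular since Theorem~\ref{theorem1} does not use Theorem~\ref{theorem2}.

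The upper bounds, however, have a genuine gap at exactly the step you defer. Reducing modulo the full normal subgroup $N\cong C_{mn}$ is too coarse: from $|S|=m+2mn-2$ your greedy extraction via $\mathsf s(C_m)=2m-1$ produces only $2n-1$ blocks of length $m$ with $\sigma(T_i)\in N$, and you then need a sub-collection of size divisible by $n$ whose products multiply to $1$ in $C_{mn}$. The zero-sum constants of the kernel are of order $mn$, not $n$, so $2n-1$ block products are far too few, and your rescue plan (EGZ in $C_n$ to land in the order-$m$ subgroup $\langle y^{n}\rangle$, then absorb the discrepancy by reordering within and between blocks) has no general basis: if all terms of $S$ lie in the abelian subgroup $\langle y\rangle$, then $\pi(S)$ is trivial, the blocks are arbitrary $m$-element groups, reorderings change nothing, and a family of blocks each with product $y$ admits no choice of $n$ of them with product $1$ when $m\ge 2$ (the only admissible selection size is $n$, and $y^{n}\ne 1$). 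The same objection is fatal to the sketches for $\eta(G)$ and $\mathsf s(G)$: with $|S|=2m+mn-2$ you get only about $n$ blocks, whereas working inside the kernel $C_{mn}$ would require on the order of $\mathsf D(C_{mn})=mn$, resp.\ $\mathsf s(C_{mn})=2mn-1$, block products. So the ``hardest step'' you flag is not a technicality to be filled in by semidirect-product manipulations; as formulated, the realizable-product set can be a single element, and the approach stalls.

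The paper circumvents this by never collapsing all of $C_{mn}$ at once (Lemma~\ref{lemma7}). It first treats $C_m\ltimes_{\varphi}C_m$ by induction on the prime factorization of $m$, using the normal subgroup $C_p\times C_p$ of Lemma~\ref{lemma6} together with $\eta(C_p\times C_p)=3p-2$, $\mathsf s(C_p\times C_p)=4p-3$ and $\mathsf D(C_p\times C_p\times C_p)=3p-2$; then for $G\cong C_m\ltimes_{\varphi}C_{mn}$ it projects onto $C_m\ltimes_{\varphi}C_m$ with kernel only $C_n$, so the block products live in $C_n$ and the counts match exactly ($n\ge\eta(C_n)$ blocks for $\eta(G)$, $2n-1\ge\mathsf s(C_n)$ for $\mathsf s(G)$). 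For $\mathsf s_{mn\mathbb N}(G)$ the divisibility-by-$mn$ constraint is encoded by passing to $G\times C_{mn}\rightarrow(C_m\ltimes_{\varphi}C_m)\times C_m$ (each term multiplied by a fixed auxiliary generator), so the final selection happens in the kernel $C_n\times C_n$, where precisely $2n-1=\mathsf D(C_n\times C_n)$ block products are available. If you wish to salvage your route, you must either prove a new structural statement about products of block unions in the non-abelian kernel (which the abelian degeneration above shows cannot hold in the generality you need) or shrink the kernel as the paper does.
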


Our next topic is the Noether number in invariant theory, which had been shown to be greatly connected with the zero-sum theory in recent years. Recall that the Noether number $\beta(G)$ of a finite group $G$ is $\sup_V\beta(G,V)$, where $V$ ranges over all finite dimensional $G$-modules $V$ over a fixed base field $\mathbb F$, and $\beta(G,V)$ is the smallest integer $d$ such that the algebra $\mathbb{F}[V]^G=\{f\in\mathbb{F}[V]|\text{ }f^g=f,\text{ for all }g\in G\}$ of polynomial invariants is generated by its elements of degree at most $d$.

In 1916, E. Noether \cite{Noe} proved that $\beta(G)\le |G|$ provided that char$(\mathbb{F})=0$. It can be easily verified that $\beta(C_n)=n$, where $C_n$ is a cyclic group of order $n$. B.J. Schmid \cite{BS} proved that for non-cyclic groups, Noether's bound was never sharp, that is $\beta(G)\le |G|-1$ for non-cyclic group $G$. Meanwhile, she showed that for any abelian group $G$, $\beta(G)=\mathsf D(G)=\mathsf d(G)+1$, which is very interesting and established a connection between invariant theory and zero-sum theory. Moreover, she also showed that the key step to improving the Noether bound is to find a better upper bound for $\beta(C_p\ltimes_{\varphi} C_q)$, where $C_p\ltimes_{\varphi} C_q$ is the semidirect product of cyclic groups of odd prime order. For the history of this problem and recent progress, see the recent paper \cite{CDG} by Cziszter, Domokos and Geroldinger (and the references there), their paper contains a wonderful survey in this topic, and in Section 5 of their paper, they showed a lot of striking similarities of features of the Noether number and the Davenport constant. Also see \cite{CD4, CDS, HMP} for the very recent progress on Noether number and Davenport constant.

In this paper, we obtain the precise value of the Noether number for $C_m\ltimes_{\varphi} C_{mn}$.

\begin{theorem} \label{theorem3}
Let $m,n$ be any positive integers, then
$$\beta(C_m\ltimes_{\varphi} C_{mn})=mn+m-1.$$
\end{theorem}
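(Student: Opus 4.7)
The strategy is to sandwich $\beta(G)$ between $\mathsf d(G)+1$ and a matching upper bound, with Theorem~\ref{theorem1} supplying the Davenport-constant input. For the lower bound $\beta(G)\ge mn+m-1$ I would invoke the general inequality $\beta(G)\ge\mathsf d(G)+1$, valid for every finite group: given a product-one free sequence $g_1,\ldots,g_k$ of length $k=\mathsf d(G)$, one exhibits on a suitable $G$-module a multi-linear invariant of degree $k+1$ that is indecomposable (see \cite{CDG} for this style of construction). Theorem~\ref{theorem1} gives $\mathsf d(G)=mn+m-2$, which produces the lower bound immediately.

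For the upper bound $\beta(G)\le mn+m-1$, which is the bulk of the work, I would exploit the normal abelian subgroup $N\cong C_{mn}$ with cyclic quotient $G/N\cong C_m$. For any $G$-module $V$, decompose the polynomial ring into its $N$-isotypic components $\mathbb F[V]=\bigoplus_{\chi\in\widehat N}\mathbb F[V]_\chi$. A multi-homogeneous $G$-invariant can, after Reynolds-style averaging over $G/N$, be written as a sum of $G$-orbit sums of monomials $f_1\cdots f_k$ with $f_i\in\mathbb F[V]_{\chi_i}$; $N$-invariance of such a monomial forces $\chi_1+\cdots+\chi_k=0$ in $\widehat N\cong C_{mn}$, and $G$-invariance after averaging encodes additional compatibility with the induced $C_m$-action on $\widehat N$. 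The central combinatorial step is to assign to each pair $(f_i,\chi_i)$ an element of $G$ that simultaneously records its $N$-weight and a choice of $G/N$-coset, in such a way that a proper $G$-invariant sub-monomial corresponds to a proper product-one subsequence in $G$. Once this encoding is in place, Theorem~\ref{theorem1} supplies such a subsequence whenever $k>\mathsf d(G)=mn+m-2$, giving the desired decomposition of the invariant as a product of two lower-degree $G$-invariants.

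The principal obstacle is making the encoding work and guaranteeing \emph{properness} of the obtained subsequence: one is effectively required to bound by $mn+m-1$ the maximal length of a \emph{minimal} product-one sequence in $G$ — the large Davenport constant — a statement that is automatic in the abelian setting but requires genuine work for semidirect products. I would tackle this via an inductive argument inside $G$ that uses both the $N$-weight decomposition and the explicit value of $\mathsf d(G)$ just obtained, carving off short product-one factors until what remains is itself product-one free. A secondary technical point is the passage from multi-homogeneous to arbitrary homogeneous invariants, which in non-modular characteristic reduces to the multi-linear case by polarization but should be spelled out so that the bound holds uniformly in the choice of $V$.
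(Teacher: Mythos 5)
Your lower bound is fine in substance: with $\mathsf d(G)=mn+m-2$ from Theorem~\ref{theorem1}, the inequality $\beta(G)\ge\mathsf d(G)+1$ would give $mn+m-1$; but for non-abelian groups this inequality is not the trivial abelian monomial argument, so you should either cite it precisely or argue as the paper does, applying the Cziszter--Domokos inequality $\beta(G)\ge\beta(H)+\mathsf D(G/H)-1$ (Lemma~\ref{lemma8}(3)) to the normal subgroup $H=C_{mn}$ with $G/H\cong C_m$, which yields $mn+m-1$ directly and without any reference to $\mathsf d(G)$.

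The upper bound is where the proposal breaks down, and at exactly the point you flag. Your encoding reduces decomposability of a degree-$k$ invariant to finding a proper product-one subsequence (with well-behaved complement) in a length-$k$ sequence over $G$; as you say, this effectively requires that every minimal product-one sequence over $G$ have length at most $mn+m-1$, i.e.\ that the large Davenport constant satisfy $\mathsf D(G)\le mn+m-1$. That statement is false for these groups: already for $m=2$ and $\varphi$ the inversion, $G$ is dihedral of order $4n$, where $\mathsf d(G)=2n$ and $\mathsf D(G)=\mathsf d(G)+|G'|=3n$ by Geroldinger--Grynkiewicz \cite{GeG}, exceeding $mn+m-1=2n+1$ for all $n\ge 2$; so no amount of ``carving off short product-one factors'' can establish it. A second warning sign is that your sketch uses only a normal cyclic subgroup with cyclic quotient plus the value of $\mathsf d(G)$: the same data are available for $Q_8$ (normal $C_4$, quotient $C_2$, $\mathsf d(Q_8)=4$), yet $\beta(Q_8)=6>\mathsf d(Q_8)+1$ (cf.\ Theorem~\ref{theorem4}), so an argument of the shape you describe cannot close without exploiting the split metacyclic structure much more specifically --- in the stated generality it would in fact settle the paper's concluding conjecture, which contains Pawale's open conjecture. (Note also that, taken literally, ``decomposable whenever $k>\mathsf d(G)$'' would give $\beta(G)\le\mathsf d(G)$ and contradict your own lower bound, so everything hinges on the properness issue you defer.) The structural reason the splitting fails is that for non-abelian $G$ an invariant is an orbit sum, not a monomial: a sub-monomial of trivial $N$-weight with compatible coset data does not factor the orbit sum into a product of two invariants, because cross terms survive the averaging over $G/N$. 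The paper's proof avoids this by working with the generalized Noether numbers $\beta_k$: it bounds $\beta(G,V)\le\beta_n(H,V)$ for the index-$n$ subgroup $H=C_m\ltimes_{\varphi}C_m$ (Lemma~\ref{lemma8}(1)), then proves $\beta_k(H,V)\le km+m-1$ (Lemma~\ref{lemma9}) by induction on the prime divisors of $m$, using the normal subgroup $C_p\times C_p$ of Lemma~\ref{lemma6}, the reduction $\beta_k(G,V)\le\beta_{\beta_k(G/N)}(N,V)$ (Lemma~\ref{lemma8}(2)), and the exact value $\beta_k(C_p\times C_p)=\mathsf D_k(C_p\times C_p)=kp+p-1$ (Lemma~\ref{Dk}); some relative bookkeeping of this kind, rather than a single splitting step governed by $\mathsf d(G)$, is what makes the upper bound go through.
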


The result shows that $\beta(C_m\ltimes_{\varphi} C_{mn})=\mathsf d(C_m\ltimes_{\varphi} C_{mn})+1$ which is consistent with the abelian group case. Moreover, we give an upper bound for the nilpotent group.

\begin{theorem} \label{theorem4}
Let $G$ be a non-cyclic nilpotent group and $p$ the smallest prime divisor of $|G|$, then
$$\beta(G)\le \frac{|G|}{p}+p-1$$
except if $p=2$ and $G$ is a dicyclic group, in which case $\beta(G)=\frac{1}{2}|G|+2$.
\end{theorem}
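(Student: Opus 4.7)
The plan is to exploit the direct product decomposition of a nilpotent group into its Sylow subgroups, then invoke the known Noether-number bounds for $p$-groups. Write $G = P_1 \times \cdots \times P_r$, where $P_i$ is the Sylow $p_i$-subgroup and $p = p_1 < \cdots < p_r$; since $G$ is non-cyclic, at least one $P_i$ must be non-cyclic.

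The base case $r = 1$ reduces to the known Noether-number computations for non-cyclic $p$-groups surveyed in \cite{CDG}: for a non-cyclic $p$-group $P$ that is not a generalized quaternion 2-group, $\beta(P) \le |P|/p + p - 1$ (with equality for $(C_p)^2$), while for the generalized quaternion group $Q_{2^k}$, which is precisely the family of nilpotent dicyclic groups, $\beta(Q_{2^k}) = 2^{k-1} + 2 = |G|/2 + 2$. This disposes of the exception in the theorem as well.

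For $r \ge 2$ I would proceed by induction on $|G|$. Write $G = P \times H$ with $P = P_1$ and $H = P_2 \times \cdots \times P_r$, so $\gcd(|P|,|H|) = 1$. Since $G$ is non-cyclic, either $P$ is non-cyclic (Case A, to which the $p$-group bound above applies) or $H$ is a non-cyclic nilpotent group whose smallest prime $p_2$ strictly exceeds $p$, to which the induction hypothesis applies (Case B). The decomposition $\mathbb{F}[V]^G = (\mathbb{F}[V]^P)^H$ then lets one reduce the problem to how the coprime $H$-action on $\mathbb{F}[V]^P$ affects generating degrees.

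The main obstacle is establishing a sharp enough combination inequality for coprime direct products. The naive submultiplicative bound $\beta(G_1 \times G_2) \le \beta(G_1)\beta(G_2)$ is already too weak: together with $\beta(P) \le |P|/p + p - 1$ and $\beta(H) \le |H|$ it yields only $\beta(G) \le |G|/p + (p-1)|H|$, which overshoots the target $|G|/p + p - 1$ as soon as $|H| > 1$. Closing this gap requires a finer analysis of the degrees in the double invariant algebra $(\mathbb{F}[V]^P)^H$, ideally showing that an $H$-equivariant minimal generating set of $\mathbb{F}[V]^P$ can be chosen so that its $H$-invariants appear in degrees bounded by $\beta(P)$ plus at most $p - 1$; alternatively one may appeal to a product formula for $\beta$ of coprime nilpotent groups from the Cziszter-Domokos-Geroldinger framework. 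In Case B one additionally needs that the slack $p_2 > p$ allows the inductive bound for $H$, multiplied by the cyclic factor $|P|$, to fit inside $|G|/p + p - 1$.
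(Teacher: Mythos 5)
Your proposal has a genuine gap at exactly the point you flag as ``the main obstacle'': there is no valid mechanism supplied (or available) for passing from bounds on the Sylow factors to the bound for $G=P\times H$, and the two fixes you sketch cannot work. A product formula of the shape $\beta(G_1\times G_2)\le\beta(G_1)+\beta(G_2)-1$ for coprime factors is false already for cyclic groups, since $\beta(C_m\times C_n)=\beta(C_{mn})=mn$ when $\gcd(m,n)=1$. Your alternative suggestion -- that an $H$-equivariant generating set of $\mathbb F[V]^P$ yields $H$-invariant generators in degree at most $\beta(P)+p-1$ -- is refuted by $G=C_p\times C_p\times C_q$ with $q$ a large prime: here $\beta(G)=\mathsf D(C_p\oplus C_{pq})=pq+p-1=\frac{|G|}{p}+p-1$ (the theorem's bound is attained), whereas $\beta(C_p\times C_p)+p-1=3p-2$ is far too small, so no bound of the form $\beta(P)+O(p)$ can hold. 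Your base case is also not merely a citation: the assertion $\beta(P)\le |P|/p+p-1$ for all non-cyclic $p$-groups is essentially the theorem itself in the prime-power case, so appealing to ``known computations surveyed in \cite{CDG}'' leaves the core of the argument unproven.

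The paper avoids any coprime-combination step. Instead it uses the generalized Noether number: if some Sylow subgroup $G_{p_j}$ contains a subgroup $H\cong C_{p_j}\times C_{p_j}$, then $H$ is a subgroup of $G$ of index $|G|/p_j^2$, and the Cziszter--Domokos reduction lemma (Lemma \ref{lemma8}(1)) gives
\[
\beta(G,V)\le\beta_{[G:H]}(H,V)\le\beta_{|G|/p_j^2}(C_{p_j}\times C_{p_j})=\mathsf D_{|G|/p_j^2}(C_{p_j}\times C_{p_j})=\tfrac{|G|}{p_j}+p_j-1\le\tfrac{|G|}{p}+p-1,
\]
using $\beta_k=\mathsf D_k$ for abelian groups and $\mathsf D_k(C_p\times C_p)=kp+p-1$ (Lemma \ref{Dk}). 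If no Sylow subgroup contains such an $H$, then by Lemma \ref{pgroup} necessarily $p=2$ and the Sylow $2$-subgroup is dihedral, semi-dihedral or generalized quaternion; it then has a cyclic subgroup of index $2$, so $G$ itself has a cyclic subgroup of index $2$, and the exact values $\beta(G)=\frac{1}{2}|G|+1$, resp. $\frac{1}{2}|G|+2$ in the dicyclic case, follow from Theorem 10.3 of \cite{CD3}. If you want to salvage your outline, you should replace the direct-product combination step by this subgroup reduction via $\beta_k$; the Sylow decomposition is then only needed to locate $C_{p_j}\times C_{p_j}$ or to identify the exceptional $2$-group case.
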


Compared with the existing upper bound \cite{CD2, CD3, DH}, our result is an improvement when $G$ is a non-cyclic nilpotent group if $p$ is large.

\section{Preliminaries}

This section will provide more rigorous definitions for the above concepts and introduce notations that will be used repeatedly below.

Throughout, let $G$ be a finite group written multiplicatively. We define a $sequence$ over $G$ to be an element of the free abelian monoid $\big(\mathcal F(G),\bdot\big)$, see Chapter 5 of \cite{GH}, Section 3.1 of \cite{CDG} or \cite{GG} for detailed explanation. Our notation of sequences follows the notation in the papers \cite{GeG, Gry1, Oh}. Note that, since $G$ is a multiplicative group, we denote by $\emptyset$ the unit element of $\mathcal F(G)$ which is called the empty sequence. In particular, in order to avoid confusion between exponentiation of the group operation in $G$ and exponentiation of the sequence operation $\bdot$ in $\mathcal F (G)$, we define:
\[
g^{[k]}=\underset{k}{\underbrace{g\bdot\ldots\bdot g}}\in \mathcal F (G)\quad \text{and} \quad T^{[k]}=\underset{k}{\underbrace{T\bdot\ldots\bdot T}}\in \mathcal F (G) \,,
\]
for $g \in G$, \ $T\in \mathcal F (G)$ and $k \in \mathbb N_0$.

Let $S=g_1\bdot\ldots\bdot g_l=\prod_{g\in G_0}g^{\mathsf v_g(S)}$ be a sequence over $G$. Then the $length$ of $S$ is $|S|=l$. We use
$$\pi(S)=\{g_{\tau(1)}\cdots g_{\tau(l)}\in G: \tau\text{ a permutation of }[1,l]\}\subset G$$
to denote the set of products of $S$ (if $|S|=0$, we use the convention that $\pi(S)=\{1_G\}$), in particular, let $\sigma(S)=g_1\cdots g_l$.

We define
$$\prod(S)=\bigcup_{T|S,\emptyset\neq T}\pi(T)\subset G$$
to be the subsequence products of $S$. Moreover, for $1\le k\le |S|-1$, we define
$$\prod_k(S)=\bigcup_{T|S,\text{ }\emptyset\neq T\text{, }|T|=k}\pi(T)\subset G.$$
With these notations, a sequence $S$ is called
\begin{itemize}

\item a product-one sequence if $1_G\in\pi(S)$,

\item product-one free if $1_G\notin\prod(S)$.

\item minimal product-one sequence if $1_G\in\pi(S)$ and $S$ cannot be factored as a product of two non-trivial product-one subsequences.

\end{itemize}

For recent study on the algebraic and arithmetic structure of product-one sequence for non-abelian groups, we refer to \cite{Oh}. Using the above concepts, let

\begin{itemize}
\item the small Davenport constant $\mathsf d(G)$ denote the maximal length of a product-one free sequence over $G$.

\item the large Davenport constant $\mathsf D(G)$ denote the  maximal length of a minimal product-one sequence over $G$.

\end{itemize}

In this paper, we will deal with the small Davenport constant, for recent progress on large Davenport constant, we refer to \cite{GeG} and \cite{Gry1}.

For a subset $L\subset \mathbb N$, let $\mathsf s_L(G)$ denote the smallest $l\in\mathbb N_0\cup\{\infty\}$ such that every sequence $S$ over $G$ of length $|S|\ge l$ has a product-one subsequence $T$ of length $|T|\in L$. Let $\mathsf e(G)=\max\{\text{ord}(g): g\in G\}$. With these notation, we can introduce some classical examples of product-one (zero-sum) invariants:
$\mathsf D(G):=\mathsf s_{\mathbb N}(G)$ (when $G$ is abelian), $\mathsf E(G):=\mathsf s_{\{|G|\}}(G)$, $\mathsf s(G):=\mathsf s_{\{\mathsf e(G)\}}(G)$, $\eta(G):=\mathsf s_{[1,\mathsf e(G)]}(G)$ and $\mathsf s_{d\mathbb N}(G)$ ($d\in\mathbb N$).

Note that, if $G$ is nilpotent, then $G$ is the direct product of its $p$-Sylow subgroups and hence $\mathsf e(G)=\text{lcm}\{\text{ord}(g):g\in G\}$. If $G$ is abelian, then $\mathsf e(G)=\exp(G)$ which is called the $exponent$ of $G$.

The next lemma formulates a basic relationship between $\eta(G)$ and $\mathsf s(G)$. Although the proof runs along the same lines as in the abelian groups, we provide it in full detail.

\medskip
\begin{lemma}\label{lemma0}
Let $G$ be a finite group. Then $\eta (G)\le \mathsf s(G)-\mathsf e(G)+1$.
\end{lemma}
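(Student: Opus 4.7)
The plan is to adapt the standard abelian argument that $\mathsf{s}(G) \ge \eta(G) + \exp(G) - 1$ by noticing that the identity element $1_G$ commutes with everything, so concatenating copies of $1_G$ to a sequence does not disturb its set of subsequence products in any essential way. The inequality $\eta(G) \le \mathsf{s}(G) - \mathsf{e}(G) + 1$ will follow by exhibiting a long sequence with no product-one subsequence of length \emph{exactly} $\mathsf{e}(G)$.

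First I would invoke the definition of $\eta(G)$ to pick a sequence $S \in \mathcal{F}(G)$ with $|S| = \eta(G) - 1$ having no product-one subsequence of length in $[1,\mathsf{e}(G)]$. An immediate consequence is that $\mathsf{v}_{1_G}(S) = 0$, because otherwise the one-term subsequence $1_G$ would already give a product-one subsequence of length $1 \in [1,\mathsf{e}(G)]$.

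Next I would form the extended sequence
\[
S' = S \bdot 1_G^{[\mathsf{e}(G)-1]},
\]
whose length equals $\eta(G) + \mathsf{e}(G) - 2$, and verify that $S'$ has no product-one subsequence of length exactly $\mathsf{e}(G)$. Suppose, for contradiction, such a subsequence $T$ exists. Since $S$ contains no $1_G$, we may uniquely write $T = T_1 \bdot 1_G^{[k]}$ with $T_1 \mid S$ and $0 \le k \le \mathsf{e}(G) - 1$. The key observation is that since $1_G$ is central and acts trivially in any product, $\pi(T) = \pi(T_1)$ when $T_1 \ne \emptyset$, and $\pi(T) = \{1_G\}$ when $T_1 = \emptyset$. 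If $T_1 = \emptyset$, then $|T| = k \le \mathsf{e}(G) - 1 < \mathsf{e}(G)$, a contradiction. If $T_1 \ne \emptyset$, then $T_1$ is a product-one subsequence of $S$ of length $\mathsf{e}(G) - k \in [1, \mathsf{e}(G)]$, contradicting the choice of $S$.

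Having shown $S'$ has no product-one subsequence of length $\mathsf{e}(G)$, the definition of $\mathsf{s}(G)$ forces $\mathsf{s}(G) > |S'| = \eta(G) + \mathsf{e}(G) - 2$, and rearranging yields $\eta(G) \le \mathsf{s}(G) - \mathsf{e}(G) + 1$. There is no real obstacle here; the only conceptual point (the sole place where non-abelianness could cause trouble) is the claim that padding by $1_G$'s does not alter the product set, and that is straightforward because $1_G$ lies in the center and is an identity for the group operation.
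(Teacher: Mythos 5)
Your argument is correct and rests on exactly the same idea as the paper's proof: padding a sequence with $\mathsf e(G)-1$ copies of $1_G$ and using the fact that identity terms can be stripped from (or added to) a product-one subsequence without changing its product. The only difference is that you run the argument in the contrapositive direction, via an extremal sequence of length $\eta(G)-1$ (implicitly using that $\eta(G)$ is finite for finite $G$, which is immediate by pigeonhole), whereas the paper argues directly from an arbitrary sequence of length at least $\mathsf s(G)-\mathsf e(G)+1$; this is a routine reformulation, not a genuinely different route.
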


\begin{proof}
Let $S \in \mathcal F (G)$ be a sequence of length $|S| \ge \mathsf s (G) - \mathsf e (G)+1$. We have to verify that $S$ has product-one subsequence of length in $[1, \mathsf e (G)]$. The sequence $T = S \bdot 1^{[\mathsf e (G)-1]}$ satisfies $|T| \ge \mathsf s (G)$ and thus there exists a product-one subsequence $T' = S' \bdot 1^{[k]}$ where $k \in [0, \mathsf e (G)-1]$, $S' \mid S$, and $|T'|=|S'|+k=\mathsf e (G)$. Thus $S'$ is a product-one subsequence of $S$ with length $|S'| \in [1, \mathsf e (G)]$.
\end{proof}

Next, we recall \cite{Group} the definition of $C_m\ltimes_{\varphi} C_n$. It is generated by two elements $x,y$, where ord$(y)=m$ and ord$(x)=n$, $\langle x\rangle\cap\langle y\rangle={1}$ and $\varphi:C_m\rightarrow$  Aut$(C_n)$ being a group homomorphism such that $\varphi(y)\cdot x=yxy^{-1}=x^s$, $1\leq s\leq n-1$. It can be easily verified that $s^m\equiv1\pmod{n}$.

We employ the following lemmas in our proof.

\begin{lemma}\label{lemma1}
For every finite group $G$, $\mathsf d(G)+|G|\le\mathsf E(G)\le 2|G|-1.$
\end{lemma}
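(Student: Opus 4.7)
The lemma has two separate bounds, so I would prove them independently.

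For the lower bound $\mathsf d(G)+|G|\le \mathsf E(G)$, the plan is to exhibit a sequence of length $\mathsf d(G)+|G|-1$ with no product-one subsequence of length exactly $|G|$. By the definition of $\mathsf d(G)$, pick a product-one free sequence $T\in\mathcal F(G)$ with $|T|=\mathsf d(G)$, and consider
\[
S = T \bdot 1_G^{[|G|-1]} \in \mathcal F(G),
\]
which has length $\mathsf d(G)+|G|-1$. Any subsequence $S'\mid S$ of length $|G|$ decomposes as $S'=T'\bdot 1_G^{[k]}$ with $T'\mid T$ and $0\le k\le |G|-1$; since $|S'|=|G|>|G|-1\ge k$, the subsequence $T'$ is nonempty. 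Adjoining or removing copies of $1_G$ does not affect the product set $\pi$, so $1_G\in\pi(S')$ would force $1_G\in\pi(T')\subset\prod(T)$, contradicting that $T$ is product-one free. Hence no length-$|G|$ product-one subsequence exists in $S$, which forces $\mathsf E(G)\ge |S|+1 = \mathsf d(G)+|G|$.

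For the upper bound $\mathsf E(G)\le 2|G|-1$, the plan is simply to invoke the known extension of the Erd\H{o}s--Ginzburg--Ziv theorem to arbitrary finite groups (the solvable case is already mentioned in the introduction as the 1961 theorem of Erd\H{o}s, Ginzburg and Ziv, and the statement holds in general by later work, e.g.\ Olson). Since this is a classical result that the paper cites in its opening discussion and uses as the starting point for all subsequent refinements, no independent argument is needed here beyond the citation.

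The only nontrivial obstacle is psychological: one has to be careful that the construction $T\bdot 1_G^{[|G|-1]}$ really has no length-$|G|$ product-one subsequence, and this hinges on the trivial but crucial observation that inserting or deleting identity elements in an ordered product leaves that product unchanged, so $\pi(T'\bdot 1_G^{[k]})=\pi(T')$ for every $T'\mid T$ and every $k\ge 0$. Once this is noted, both halves of the lemma follow immediately.
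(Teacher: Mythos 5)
Your proposal is correct and matches the substance of the paper's proof: the paper simply cites Zhuang--Gao \cite{ZG} for the lower bound and Olson \cite{Olson1} for the upper bound, and your explicit construction $T\bdot 1_G^{[|G|-1]}$ with the observation that identity elements do not alter $\pi$ is exactly the standard argument behind the first citation. Writing out the lower bound rather than citing it is a harmless (indeed self-contained) variant, and your citation for the bound $\mathsf E(G)\le 2|G|-1$ is the right one.
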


\begin{proof}
The lower bound can be found in \cite{ZG}, and the upper bound can be found in \cite{Olson1}.
\end{proof}

\begin{lemma} \label{lemma2}~
\begin{enumerate}
\item If $G\cong C_p^r$ for a prime $p$ and $r \in \mathbb N$, then $\mathsf D (G) = r(p-1)+1$.

\smallskip
\item If $G\cong C_n$, then $\mathsf D(G)=\eta(G)=n$ and $\mathsf s(G) = 2n-1$.

\smallskip
\item If $G\cong C_{n_1} \oplus C_{n_2}$ with $1 \le n_1 \mid n_2$, then $\mathsf d (G)=n_1+n_2-2$, $\eta (G) = 2n_1+n_2 - 2$, and $\mathsf s (G) = 2n_1+2n_2-3$.

\smallskip
\item If $G\cong C_{n_1} \oplus C_{n_2}$ with $1 \le n_1 \mid n_2$, then $\mathsf s_{n_2\mathbb N}(G)=n_1+2n_2-2$.
\end{enumerate}
\end{lemma}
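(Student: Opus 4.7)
The plan is to recognize that Lemma \ref{lemma2} merely collects four classical results on zero-sum invariants of finite abelian groups, none of which require new arguments here; the proof will therefore consist of citations together with a brief indication of the underlying method in each case.

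For part (1), the lower bound $\mathsf D(C_p^r)\ge r(p-1)+1$ is witnessed by the sequence $e_1^{[p-1]}\bdot\ldots\bdot e_r^{[p-1]}$ for a basis $e_1,\dots,e_r$, while the matching upper bound is Olson's theorem, proved by embedding a product-one free sequence into the augmentation ideal of the group algebra $\mathbb F_p[C_p^r]$ and exploiting its nilpotency index $r(p-1)+1$. Part (2) combines three standard facts: $\mathsf D(C_n)=n$ follows by the pigeonhole principle applied to partial products; $\eta(C_n)=n$ holds because every product-one subsequence has length at most $\mathsf e(C_n)=n$ and pigeonhole again supplies one of the required length; and $\mathsf s(C_n)=2n-1$ is the original Erd\H os--Ginzburg--Ziv theorem, obtained by reducing to prime $n$ via the Chinese Remainder Theorem and then applying Chevalley--Warning.

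For part (3), Olson's formula $\mathsf d(C_{n_1}\oplus C_{n_2})=n_1+n_2-2$ proceeds by induction on $n_2$, reducing modulo a cyclic subgroup to the rank-one case. The values $\eta(C_{n_1}\oplus C_{n_2})=2n_1+n_2-2$ and $\mathsf s(C_{n_1}\oplus C_{n_2})=2n_1+2n_2-3$ are deeper; the lower bounds come from the extremal sequence $e_1^{[n_1-1]}\bdot e_2^{[n_2-1]}$ (repeated suitably), while the matching upper bounds rest on the Kemnitz conjecture, resolved by Reiher in the case $n_1=n_2$ prime, together with a reduction argument of Gao--Geroldinger that handles composite indices. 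Once both $\eta$ and $\mathsf s$ are established independently, Lemma \ref{lemma0} gives a sanity check. Part (4) is due to Geroldinger, Grynkiewicz and Schmid (see \cite{GGS}); the lower bound uses an extremal construction in the spirit of the one for $\mathsf s$, and the upper bound is obtained by an inductive reduction using the structure of Davenport constants on quotients.

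All four statements are standard entries in the monograph \cite{GH} and the survey \cite{GG}, so the "proof" will amount to a short list of references. The main obstacle, were one to insist on self-contained arguments, would be the rank-two value of $\mathsf s$, which depends on the full strength of Reiher's theorem; in the present preliminary section, however, these facts are invoked as black boxes.
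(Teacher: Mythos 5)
Your proposal is correct and takes essentially the same route as the paper: Lemma~\ref{lemma2} is proved there purely by citation (parts (1)--(3) to Chapter 5 of \cite{GH}, namely Theorem 5.5.9, Corollary 5.7.5 and Theorem 5.8.3, and part (4) to Theorem 5.2 of \cite{GGS}), which is exactly what you do, with your added sketches of the underlying methods being accurate but not required.
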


\begin{proof}
The proofs of (1), (2) and (3) can be found in Chapter 5 of \cite{GH}, see Theorem 5.5.9, Corollary 5.7.5 and Theorem 5.8.3. The proof of (4) can be found in \cite{GGS}, see Theorem 5.2.
\end{proof}

\begin{lemma} \label{lemma4}{\rm(\cite{Han}, Lemma 2.4)}
Let $S$ be a sequence over $C_n$.
\begin{enumerate}

\item If $|S|=kn+n-1$, then $S$ contains a product-one subsequence $T$ of length $kn$;

\item If $|S|=kn+n-2$ and $S$ contains no product-one subsequence of length $kn$, then $S$ must be the type $S=a^{xn-1}b^{yn-1}$, where $x+y=k+1$ and $\langle ab^{-1}\rangle=C_n$. Moreover $\prod_{kn-2}(S)=C_n.$

\end{enumerate}
\end{lemma}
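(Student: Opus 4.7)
The plan is to establish (1) by a routine induction on $k$ driven by the Erd\H{o}s-Ginzburg-Ziv theorem, and (2) by a more delicate induction whose base case is the classical extremal EGZ structure theorem. For (1), the base case $k=1$ is EGZ itself: any sequence of length $2n-1$ over $C_n$ has a product-one subsequence of length $n$. For $k \ge 2$, since $|S|= kn+n-1 \ge 2n-1$, EGZ yields a product-one subsequence $T_1 \mid S$ with $|T_1|=n$; writing $S = T_1 \bdot R$, the tail $R$ has length $(k-1)n+n-1$, and the induction hypothesis produces a product-one subsequence of length $(k-1)n$ in $R$, which combines with $T_1$ into the desired product-one subsequence of length $kn$ in $S$.

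For (2) I induct on $k$. The base case $k=1$ asserts that a sequence of length $2n-2$ over $C_n$ admitting no product-one subsequence of length $n$ must equal $a^{[n-1]} \bdot b^{[n-1]}$ with $\langle ab^{-1}\rangle = C_n$; this is the well-known extremal refinement of EGZ (see Chapter 5 of \cite{GH}). For $k \ge 2$, apply EGZ to extract $T \mid S$ with $|T|=n$ and $1_G \in \pi(T)$, and write $S = T \bdot S'$. Then $S'$ has length $(k-1)n+n-2$ and cannot contain a product-one subsequence of length $(k-1)n$, else concatenation with $T$ would give one of length $kn$ in $S$, contradicting the hypothesis. The induction hypothesis then gives $S' = a^{[xn-1]} \bdot b^{[yn-1]}$ with $x+y=k$ and $\langle ab^{-1}\rangle = C_n$.

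The crux of the argument, and the step I expect to be the main obstacle, is to propagate this two-element structure from $S'$ to the whole sequence $S$ by showing that every element of $T$ lies in $\{a,b\}$. My plan is a swap-and-contradict argument: if some $c\ne a,b$ occurs in $T$, exchange $c$ out of $T$ with a suitable combination of $a$'s and $b$'s drawn from $S'$ (using $\langle ab^{-1}\rangle=C_n$ to keep the product in $\pi$ equal to $1_G$) to produce an alternative product-one subsequence $T'\mid S$ of length $n$. The complementary sequence of $T'$ in $S$ then contains $c$ and still admits no product-one subsequence of length $(k-1)n$; by the induction hypothesis it must be of the two-element form with support including $c$, which is incompatible with its shape as recovered from the original $S'$ and $T$, giving the contradiction. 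Consequently $S$ itself has the claimed form with total multiplicities summing to $k+1$.

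Finally, for $\prod_{kn-2}(S) = C_n$: by passing to complementary subsequences within $S$ (of lengths $kn-2$ and $n$ summing to $|S|$), it suffices to prove $\prod_n(S) = C_n$. Every length-$n$ subsequence of $S = a^{[xn-1]} \bdot b^{[yn-1]}$ has the form $a^{[i]} \bdot b^{[n-i]}$ with product $(ab^{-1})^i \, b^n = (ab^{-1})^i$. Since $k \ge 2$ and hence $x+y = k+1 \ge 3$ forces $\max(x,y) \ge 2$, the admissible index $i$ sweeps a full set of residues modulo $n$, so the set of products is $\langle ab^{-1}\rangle = C_n$.
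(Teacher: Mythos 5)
Parts of your proposal are sound: part (1) is a routine EGZ induction and is correct, and your closing computation of $\prod_{kn-2}(S)$ via complementary length-$n$ subsequences is also correct, with the caveat that it (and in fact the claim itself) requires $k\ge 2$: for $k=1$ the extremal sequence $a^{[n-1]}\bdot b^{[n-1]}$ realizes only $n-1$ of the $n$ elements as products of length-$(n-2)$ subsequences, so the ``moreover'' clause is false there; you assume $k\ge2$ silently, and you should flag it (the paper only invokes this clause with $k=m\ge2$). Note also that the paper does not prove this lemma at all but cites \cite{Han}, Lemma 2.4, so the only issue is whether your argument closes on its own.

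It does not, because the step you yourself identify as the crux --- showing every term of $T$ lies in $\{a,b\}$ --- is left as a plan, and both of its moves are unjustified as stated. First, the proposed ``exchange $c$ for a suitable combination of $a$'s and $b$'s keeping the product one'' is not obviously available: replacing the single $c$ by one $a$ or $b$ destroys the product-one property, and whether a length-$n$ product-one swap confined to $T$ and $S'$ exists needs its own argument. What you actually need --- some product-one subsequence $T'$ of $S$ of length $n$ avoiding the fixed copy of $c$ --- follows immediately from EGZ applied to $S$ with that copy of $c$ deleted (length $kn+n-3\ge 2n-1$ for $k\ge2$, $n\ge2$), and you should argue it that way. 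Second, and more seriously, the asserted ``incompatibility'' of the shape of $S''=S\bdot T'^{-1}$ with the data from $S'$ and $T$ is not automatic: the induction hypothesis only gives $S''=u^{[x''n-1]}\bdot v^{[y''n-1]}$ with $c\in\{u,v\}$, and since you have no control over which terms $T'$ uses, configurations such as $T'=a^{[n]}$ with the support of $S''$ equal to $\{c,b\}$ (so $T'$ swallows every copy of $a$, forcing $x'=1$) are perfectly consistent with all length and multiplicity counts. Ruling them out requires a genuine case analysis exploiting $\sigma(T)=1$ and $\sigma(T')=1$ together with the support constraint: e.g.\ for $n\ge3$ one first shows $\{u,v\}$ must meet $\{a,b\}$ because $T'$ cannot absorb the at least $2(n-1)$ copies of $a$ and $b$; then, in the configuration above, one computes $\sigma(T)=ac^{-1}\ne 1$, contradicting that $T$ is product-one, and similar short computations with $\sigma(T')$ dispose of the remaining subcases (and $n=2$ needs separate words). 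None of this appears in the proposal, so the inductive step of (2) is not established as written, even though the route you sketch can be completed along these lines.
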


\begin{lemma}\label{lemma5}{\rm(\cite{GLi})}
Let $G$ be a non-cyclic finite solvable group of order $n$. Then every sequence over $G$ of length $kn+\frac{3}{4}n-1$ contains a product-one subsequence of length $kn$.
\end{lemma}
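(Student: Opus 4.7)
The plan is to derive this by a straightforward induction on $k$, taking the nontrivial Gao--Li bound $\mathsf E(G)\le \tfrac{7|G|}{4}-1$ as the base case. For $k=1$ we have $|S|\ge n+\tfrac{3n}{4}-1=\tfrac{7n}{4}-1\ge \mathsf E(G)$, so by definition of $\mathsf E(G)$ the sequence $S$ already contains a product-one subsequence of length exactly $n$. The case $k=0$ is vacuous with the empty subsequence.

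For the inductive step, assume the result is known for $k-1$ and let $S$ be a sequence over $G$ with $|S|\ge kn+\tfrac{3n}{4}-1$, where $k\ge 2$. Since $|S|\ge n+\tfrac{3n}{4}-1\ge \mathsf E(G)$, we may extract a product-one subsequence $T_1 \mid S$ of length $|T_1|=n$. Write $S=T_1\bdot S'$; then
\[
|S'|\ \ge\ kn+\tfrac{3n}{4}-1-n\ =\ (k-1)n+\tfrac{3n}{4}-1,
\]
so the inductive hypothesis applied to $S'$ yields a product-one subsequence $T_2 \mid S'$ with $|T_2|=(k-1)n$. Now choose an ordering of the terms of $T_1$ whose product equals $1_G$ and an ordering of the terms of $T_2$ whose product equals $1_G$, and concatenate them. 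The juxtaposition $T_1\bdot T_2$ is a subsequence of $S$ of length $kn$ whose ordered product equals $1_G\cdot 1_G=1_G$, so $T_1\bdot T_2$ is the required product-one subsequence of length $kn$.

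The solvability hypothesis enters the argument only through the base case, so the main obstacle—in fact essentially the entire content—is subsumed in the Gao--Li bound $\mathsf E(G)\le \tfrac{7|G|}{4}-1$. Given that input, the non-abelianness of $G$ causes no difficulty in passing to larger multiples of $n$: concatenation of two product-one blocks remains product-one because each block can be reordered independently before juxtaposition. This makes the induction mechanical and explains why the lemma is packaged as a convenient corollary of the underlying Erd\H{o}s--Ginzburg--Ziv type estimate.
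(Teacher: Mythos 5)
Your induction is sound: for $k\ge 1$ one has $kn+\tfrac34 n-1\ge \tfrac74 n-1\ge \mathsf E(G)$ by the Gao--Li bound, so a product-one block of length exactly $n$ can be split off, the remainder has length at least $(k-1)n+\tfrac34 n-1$, and concatenating independently reordered product-one blocks indeed yields a product-one subsequence of length $kn$; so, granting $\mathsf E(G)\le \tfrac{7|G|}{4}-1$ for non-cyclic solvable $G$, the lemma follows. Be aware, though, that the paper does not prove this lemma at all --- it quotes it verbatim from \cite{GLi} --- and in that source the logical order is essentially the reverse of yours: the statement for general $k$ (or a multi-block variant of it) is the workhorse that is established by the inductive argument on the solvable group's structure, and the bound $\mathsf E(G)\le \tfrac{7|G|}{4}-1$ is read off as the case $k=1$. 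So your argument does not replace the hard content (which sits entirely inside the cited $\mathsf E$-bound), but it is a correct and clean observation that, once the $k=1$ case is known, the general-$k$ statement is equivalent to it via the trivial block-concatenation induction; as a way of justifying the lemma from results explicitly quoted in this paper's introduction, it is perfectly adequate. Two cosmetic points: state the hypothesis as $|S|\ge \lceil kn+\tfrac34 n\rceil-1$ or ``length at least $kn+\tfrac34 n-1$'' since $\tfrac34 n$ need not be an integer, and restrict to $k\ge 1$ (the $k=0$ case is vacuous, as you note).
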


Let $\phi(n)=|\{k\text{ }|\text{ }1\le k\le n-1, (k,n)=1\}|$ be the Euler $\phi$ function.

\begin{lemma} \label{lemma6}
Let $G\cong C_m\ltimes_{\varphi} C_m$ and $p$ is the largest prime divisor of $m$, then $G$ contains a normal subgroup $N$, which is isomorphic to $C_p\times C_p$. Moreover
$$G/N\cong C_{\frac{m}{p}}\ltimes_{\varphi} C_{\frac{m}{p}}.$$
\end{lemma}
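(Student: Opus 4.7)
\medskip

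\textbf{Proof proposal.} My plan is to take the candidate normal subgroup to be
\[
N := \langle x^{m/p},\, y^{m/p}\rangle,
\]
the subgroup generated by the unique order-$p$ subgroups of $\langle x\rangle$ and $\langle y\rangle$. There are three things to check: (a) $N\cong C_p\times C_p$; (b) $N\trianglelefteq G$; (c) $G/N\cong C_{m/p}\ltimes_{\varphi}C_{m/p}$. Once (a)--(c) are reduced to number-theoretic statements about the twist exponent $s$ (where $yxy^{-1}=x^s$, $s^m\equiv 1\pmod m$), the only real content is to prove the two congruences
\begin{equation*}
s^{m/p}\equiv 1\pmod p\qquad\text{and}\qquad s^{m/p}\equiv 1\pmod{m/p}.
\end{equation*}

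For the first, observe that the order $d$ of $s$ in $(\mathbb{Z}/p\mathbb{Z})^\times$ divides both $m$ and $p-1$, hence divides $\gcd(m,p-1)$. Since $p$ is the largest prime divisor of $m$, we have $p\nmid p-1$, so $\gcd(m,p-1)$ is coprime to $p$ and therefore divides $m/p$; thus $d\mid m/p$, proving $s^{m/p}\equiv 1\pmod p$. From this one immediately gets that $y^{m/p}$ and $x^{m/p}$ commute (the conjugation replaces $x^{m/p}$ by $x^{(m/p)s^{m/p}}$, and $s^{m/p}\equiv 1\pmod p$ means $(m/p)s^{m/p}\equiv m/p\pmod m$). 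Combined with $\langle x\rangle\cap\langle y\rangle=\{1\}$, which forces $\langle x^{m/p}\rangle\cap\langle y^{m/p}\rangle=\{1\}$, this yields $N\cong C_p\times C_p$, settling (a).

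The harder step is the second congruence, needed for normality. Using the relation $yx=x^sy$ one checks by induction that
\[
xy^{k}x^{-1}=x^{1-s^{k}}y^{k}\qquad(k\ge 0),
\]
so $xy^{m/p}x^{-1}=x^{1-s^{m/p}}y^{m/p}$ lies in $N$ precisely when $m/p$ divides $1-s^{m/p}$ modulo $m$, i.e.\ when $s^{m/p}\equiv 1\pmod{m/p}$. To prove this I would argue prime by prime: for each prime $r\mid m$, bound the $r$-part of $\gcd(m,\phi(m/p))$ by the $r$-part of $m/p$. For $r=p$ the $p$-part of $\phi(m/p)$ is at most $p^{\max(a-2,0)}$ (with $p^a\|m$), since every other prime $q\mid m$ satisfies $q<p$ and hence $p\nmid q-1$; this is $\le p^{a-1}=v_p(m/p)$. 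For $r\neq p$ the bound is automatic since $v_r(m)=v_r(m/p)$. This gives $\mathrm{ord}_{m/p}(s)\mid \gcd(m,\phi(m/p))\mid m/p$, as required. The other conjugation $yx^{m/p}y^{-1}=(x^{m/p})^s$ stays inside $\langle x^{m/p}\rangle\subset N$ trivially, so (b) is done.

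For (c), because $\langle x\rangle\cap N=\langle x^{m/p}\rangle$ and $\langle y\rangle\cap N=\langle y^{m/p}\rangle$ (both routine from the normal form in the semidirect product), the images $\bar x,\bar y$ in $G/N$ have orders exactly $m/p$ and satisfy $\langle\bar x\rangle\cap\langle\bar y\rangle=\{1\}$ and $\bar y\bar x\bar y^{-1}=\bar x^{s}$. Since $\gcd(s,m/p)=1$ and $s^{m/p}\equiv 1\pmod{m/p}$ by the step above, the induced map $C_{m/p}\to\mathrm{Aut}(C_{m/p})$ is a well-defined homomorphism, giving $G/N\cong C_{m/p}\ltimes_{\varphi}C_{m/p}$. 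The expected main obstacle is the prime-by-prime verification of $s^{m/p}\equiv 1\pmod{m/p}$; everything else is direct manipulation in the semidirect product.
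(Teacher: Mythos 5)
Your proposal is correct and follows essentially the same route as the paper: it takes the same subgroup $N=\langle x^{m/p},y^{m/p}\rangle$, verifies the same two conjugation conditions, and the number-theoretic core is the same observation that $p$ being the largest prime divisor of $m$ forces the relevant $\gcd$ with an Euler $\phi$ value to divide $m/p$. The only cosmetic difference is that the paper proves the single stronger congruence $s^{m/p}\equiv 1\pmod{m}$ via $(m,\phi(m))\mid m/p$, which yields both your mod-$p$ and mod-$(m/p)$ congruences at once.
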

\begin{proof} Let $G=\langle x,y\rangle$ with ord$(x)=m$ and ord$(y)=m$, $yxy^{-1}=x^s$, $\langle x\rangle\cap\langle y\rangle={1}$ and $m=np^k$, where $(n,p)=1$ and $(m,s)=1$. We set $N=\langle x^{np^{k-1}},y^{np^{k-1}}\rangle$. We will prove that $N\cong C_p\times C_p$ and $N$ is a normal subgroup of $G$.

Firstly, we check that $N\cong C_p\times C_p$. It can be easily deduced from the fact that $(x^{np^{k-1}})^p=(y^{np^{k-1}})^p=1$, and that $N$ is not generated by $x^{np^{k-1}}$ or $y^{np^{k-1}}$, otherwise $y^{np^{k-1}}=x^{np^{k-1}t}$, where $1\leq t\leq p-1$ and since $\langle x\rangle\cap\langle y\rangle={1}$, this will lead to $y^{np^{k-1}}=1$, together with ord$(y)=m$, a contradiction.

Secondly, we prove that $N$ is normal. We just need to check that $yx^{np^{k-1}}y^{-1}\in N$ and $x^{-1}y^{np^{k-1}}x\in N$. It is clear that $yx^{np^{k-1}}y^{-1}=x^{snp^{k-1}}\in N$ and $x^{-1}y^{np^{k-1}}x=x^{s^{np^{k-1}}-1}y^{np^{k-1}}$. Now we claim that $s^{np^{k-1}}\equiv1\pmod{np^k}$. Since $s^{np^{k}}\equiv 1 \pmod{np^k}$ and $s^{\phi(np^k)}\equiv 1 \pmod{np^k}$ it follows that $s^{(np^{k},\phi(np^k))}\equiv 1 \pmod{np^k}$. Moreover, since $p$ is the largest prime divisor of $np^k$, simple calculation shows that $(np^k, \phi(np^k)) | np^{k-1}$, this proves the claim and it follows that $x^{-1}y^{np^{k-1}}x=x^{s^{np^{k-1}}-1}y^{np^{k-1}}=y^{np^{k-1}}\in N$.

Moreover, let $\overline{x}=xN$, $\overline{y}=yN$, the structure of the group $G/N$ is $\langle \overline{x},\overline{y}\rangle$ with $\overline{x}^{np^{k-1}}=\overline{1}=\overline{y}^{np^{k-1}}$ and $\overline{y} \overline{x}(\overline{y})^{-1}=\overline{x}^s$.
\end{proof}

We shall use the following definitions:

\begin{enumerate}

\item $Q_{2^{n+1}}=\langle a,b\text{ }|\text{ }a^{2^n}=1,\text{ }b^2=a^{2^{n-1}},\text{ }b^{-1}ab=a^{-1}\rangle$, the generalized quaternion group, where $n>1$.

\item $SD_{2^{n+1}}=\langle a,b\text{ }|\text{ }a^{2^n}=b^2=1,\text{ }bab=a^{-1+2^{n-1}}\rangle$, the semi-dihedral group, where $n>2$.

\item $Dic_{4n}=\langle a,b\text{ }|\text{ }a^{2^n}=1,\text{ }b^2=a^n,\text{ }bab^{-1}=a^{-1}\rangle$, the dicyclic group, where $n>1$.

\end{enumerate}

\begin{lemma} \label{pgroup}
Let $G$ be a non-cyclic $p$-group where $p$ is a prime, then $G$ contains a normal subgroup $H$ such that
$$H\cong C_p\times C_p,$$
unless $p=2$
and $G$ is a dihedral, semi-dihedral, or generalized quaternion group.
\end{lemma}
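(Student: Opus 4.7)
The plan is to proceed by induction on $|G|$. The base case $|G|=p^{2}$ is immediate, since any non-cyclic group of order $p^{2}$ is isomorphic to $C_{p}\times C_{p}$ itself, so we may take $H=G$. For the inductive step with $|G|\ge p^{3}$, I first dispose of the case in which the center $Z(G)$ is non-cyclic: then $Z(G)$ contains two distinct subgroups of order $p$, hence an elementary abelian subgroup of order $p^{2}$, and this subgroup is central and therefore automatically normal in $G$. So I may assume that $Z(G)$ is cyclic, and I let $N\trianglelefteq G$ be the unique subgroup of order $p$ inside $Z(G)$, which is characteristic in $G$.

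I would then apply the induction hypothesis to $\bar G=G/N$. If $\bar G$ is cyclic, then, since $N$ is central and $G/N$ is cyclic, $G$ itself is abelian; a non-cyclic abelian $p$-group $G$ with such a quotient must be of the form $C_{p^{k}}\times C_{p}$ for some $k\ge 1$, and then $\Omega_{1}(G)\cong C_{p}\times C_{p}$ is characteristic. If $\bar G$ is non-cyclic and contains a normal $\bar H\cong C_{p}\times C_{p}$, I lift $\bar H$ to a normal subgroup $H\trianglelefteq G$ of order $p^{3}$ containing $N\le Z(H)$, and locate a $G$-normal $C_{p}\times C_{p}$ inside $H$ by a case analysis on the isomorphism type of $H$: when $H$ is abelian with a characteristic $C_{p}\times C_{p}$ subgroup (as for $H\cong C_{p^{2}}\times C_{p}$), that subgroup works; when $H\cong C_{p}^{3}$ or $H$ is the Heisenberg group of exponent $p$, the set of $C_{p}\times C_{p}$ subgroups of $H$ has cardinality $\equiv 1\pmod p$, so the action of the $p$-group $G/H$ on it must have a fixed point; and when $H$ is non-abelian of exponent $p^{2}$ with $p$ odd, $\Omega_{1}(H)\cong C_{p}\times C_{p}$ is characteristic. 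Finally, if $\bar G$ is non-cyclic and belongs to the exceptional family, which only happens when $p=2$, I verify by direct inspection of the central $C_{2}$-extensions of $D_{2^{n}}$, $SD_{2^{n}}$ and $Q_{2^{n}}$ that $G$ itself lies in the exceptional family unless a normal Klein four-subgroup materialises outside the immediate pull-back.

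The main obstacle is concentrated in the $p=2$ cases: the case $\bar G$ exceptional above, together with the companion subcases $H\cong D_{8}$ and $H\cong Q_{8}$ from the preceding paragraph. The difficulty is that $Q_{8}$ contains no $C_{2}\times C_{2}$ at all, while the two Klein four-subgroups of $D_{8}$ may be swapped by conjugation from $G$, so in these situations the desired normal subgroup cannot always be located inside $H$ and must instead be produced by examining centralizers of involutions in $G$ and the structure of $G$ as a central $C_{2}$-extension. Working these cases out is essentially the classical classification of $2$-groups of maximal class, and this is precisely what gives rise to the three exceptional series recorded in the statement.
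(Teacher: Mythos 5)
The paper does not actually prove this lemma: its ``proof'' is the single line ``See Lemma 1.4 in \cite{Ber}'', so the statement is being used as a quoted classical result from Berkovich's book. Judged as a self-contained argument, your proposal has a genuine gap, and your own closing paragraph admits it. The parts you carry out are correct: the reduction to cyclic centre, the abelian case via $\Omega_1$, and, for the lifted normal subgroup $H$ of order $p^3$, the treatment of $C_p^3$ and the Heisenberg group by counting $C_p\times C_p$ subgroups modulo $p$, and of $C_{p^2}\times C_p$ and the odd non-abelian group of exponent $p^2$ via characteristic subgroups. But the cases that actually give rise to the exceptional list are only named, never settled: (i) when $H\cong D_8$ or $H\cong Q_8$ you correctly observe that no $G$-normal Klein four subgroup need exist inside $H$, and then assert it ``must instead be produced by examining centralizers of involutions'' without doing so; (ii) when $\bar G=G/N$ is dihedral, semi-dihedral or generalized quaternion you promise a ``direct inspection of the central $C_2$-extensions'' of these groups that is never performed, and this is a real computation, since several non-isomorphic central extensions occur for each $\bar G$; (iii) you then concede that completing these cases ``is essentially the classical classification of $2$-groups of maximal class'' --- which is precisely the content of the lemma at $p=2$. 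An argument whose remaining work is the theorem itself is not a proof.

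If you want a self-contained proof rather than the citation the authors use, the standard route (found, e.g., in Aschbacher's \emph{Finite Group Theory} or Gorenstein's \emph{Finite Groups}) avoids your induction entirely: suppose $G$ has no normal subgroup isomorphic to $C_p\times C_p$. Then every normal abelian subgroup of $G$ is cyclic, since a non-cyclic one would have $\Omega_1$ elementary abelian of rank at least $2$, and a $G$-composition series of that $\mathbb F_p[G]$-module (all of whose factors are trivial, $G$ being a $p$-group) produces a $G$-invariant subgroup of order $p^2$ inside it. A maximal normal abelian subgroup $A$ is therefore cyclic and satisfies $C_G(A)=A$, so $G/A$ embeds into $\mathrm{Aut}(A)$; analysing this embedding forces $G=A$ cyclic when $p$ is odd, and forces $G$ to be cyclic, dihedral, semi-dihedral or generalized quaternion when $p=2$. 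Your inductive scheme can in principle be completed, but only by actually carrying out the $2$-group case analysis you have deferred.
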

\begin{proof}
See Lemma 1.4 in \cite{Ber}.
\end{proof}

\begin{lemma} \label{nilpotent}{\rm(\cite{Group})}
Let $G$ be a finite nilpotent group, then $G \cong \prod_p G_p$, where $p$
ranges over all the prime divisors of $|G|$ and $G_p$ is the Sylow $p$-subgroup of $G$.
\end{lemma}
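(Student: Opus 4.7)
The plan is to deduce the decomposition from the standard characterization that a finite nilpotent group has a unique Sylow subgroup at every prime, and then assemble those Sylow subgroups into an internal direct product.

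First I would establish the normalizer growth condition: in a finite nilpotent group $G$, every proper subgroup $H$ is strictly contained in its normalizer $N_G(H)$. I would prove this by induction on the nilpotency class using the upper central series $1 = Z_0 \le Z_1 \le \cdots \le Z_c = G$; if $Z_1 = Z(G) \not\subseteq H$, any central element outside $H$ normalizes $H$, and otherwise I pass to $G/Z(G)$, which is nilpotent of smaller class, and lift.

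Second, I would apply this to a Sylow $p$-subgroup $P = G_p$ of $G$. Setting $N = N_G(P)$, the Frattini argument shows $N$ is self-normalizing: if $g$ normalizes $N$, then $gPg^{-1}$ is a Sylow $p$-subgroup of $N$, but $P$ is the unique such subgroup, so $gPg^{-1} = P$ and $g \in N$. Combined with the normalizer growth condition, $N = G$, i.e., every Sylow subgroup $G_p$ is normal in $G$.

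Third, I would assemble the decomposition. For distinct primes $p \ne q$ dividing $|G|$, $G_p \cap G_q = \{1\}$ by Lagrange, and $[G_p, G_q] \subseteq G_p \cap G_q = \{1\}$ since both subgroups are normal; so elements of Sylow subgroups for distinct primes commute. Let $p_1, \dots, p_k$ be the prime divisors of $|G|$. The multiplication map
\[
\mu \colon G_{p_1} \times \cdots \times G_{p_k} \longrightarrow G, \qquad (g_1, \dots, g_k) \mapsto g_1 \cdots g_k,
\]
is therefore a homomorphism. Injectivity follows by induction from the coprimality of orders: the intersection of $G_{p_i}$ with $G_{p_1} \cdots \widehat{G_{p_i}} \cdots G_{p_k}$ has order dividing both a power of $p_i$ and a product of powers of the other primes, hence is trivial. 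Since $|G_{p_1} \times \cdots \times G_{p_k}| = \prod_i |G_{p_i}| = |G|$, $\mu$ is an isomorphism.

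The main obstacle is the first step, namely deriving the normalizer growth condition from the definition of nilpotency; this could be sidestepped by instead taking as the working definition any of the standard equivalent characterizations (e.g., that every maximal subgroup is normal), in which case the normality of each Sylow subgroup is a short Frattini-argument corollary and the rest of the proof is essentially formal.
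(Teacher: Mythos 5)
Your proof is correct. The paper does not actually prove this lemma---it only cites Robinson's textbook---and your argument (the normalizer condition for proper subgroups proved by induction along the upper central series, the observation that the normalizer of a Sylow subgroup is self-normalizing, hence each $G_p$ is normal, and then the assembly into an internal direct product using coprimality of orders) is exactly the standard textbook proof that the citation delegates to. One cosmetic remark: what you use in the second step is not the Frattini argument proper (which concerns $G = H N_G(P)$ for $P$ Sylow in a normal subgroup $H$) but the standard fact that normalizers of Sylow subgroups are self-normalizing; the reasoning you give for it is complete and correct as written, so this is a matter of labeling, not a gap.
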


\section{On $\eta(G)$, $\mathsf s(G)$ and $\mathsf s_{mn\mathbb N}(G)$}

In this section, we are going to investigate $\eta(G)$, $\mathsf s(G)$ and $\mathsf s_{mn\mathbb N}(G)$ for the group $G\cong C_m\ltimes_{\varphi} C_{mn}$.

\begin{remark}
Let $G$ and $H$ be finite groups with a homomorphism $G\stackrel{\theta}{\rightarrow}H$ and $\ker\theta \cong N$. Let $S=g_1\bdot \ldots \bdot g_n$ be a sequence over $G$, then $\theta(S)=\theta(g_1)\bdot \ldots \bdot \theta(g_n)$ is a sequence over $H$. If $\theta(S)$ has a product-one subsequence $\theta(T)$, where $T=g_{i_1}\bdot \ldots \bdot g_{i_m}$, then by our notation, this means that $1\in\pi(\theta(T))$. In the following, for convenience of our calculation, without loss of generality, we may assume that $1=\sigma(\theta(T))$, equivalently $\sigma(T)\in\ker\theta\cong N$.
\end{remark}

\begin{lemma} \label{lemma7}
We have the following:
\begin{enumerate}

\item Let $G\cong C_m\ltimes_{\varphi} C_{mn}$, then any sequence $S$ over $G$ of length $|S|=2m+mn-2$ contains a product-one subsequence $T$ of length $|T|\in[1,mn]$;

\item Let $G\cong C_m\ltimes_{\varphi} C_{mn}$, then any sequence $S$ over $G$ of length $|S|=2m+2mn-3$ contains a product-one subsequence $T$ of length $|T|=mn$;

\item Let $G\cong C_m\ltimes_{\varphi} C_{mn}$, then any sequence $S$ over $G$ of length $|S|=m+2mn-2$ contains a product-one subsequence $T$ of length $|T|\equiv0\pmod{mn}$.
\end{enumerate}
\end{lemma}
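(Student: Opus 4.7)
The strategy common to all three parts is to pass to the quotient $\theta \colon G \twoheadrightarrow G/N \cong C_m$, where $N = \langle x \rangle \cong C_{mn}$ is the normal cyclic subgroup, apply Lemma~\ref{lemma4} to the image $\theta(S)$ over the cyclic group $C_m$, and then leverage the abelian structure of $N$ to control the product landing in $N$. A subsequence $\theta(T)$ that is product-one in $C_m$ lifts, after reordering the terms (by the Remark of Section~3), to a subsequence $T\subset S$ with $\sigma(T)\in N$, reducing each product-one question in $G$ to a zero-sum question in the abelian group $N$.

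For part~(3), $|\theta(S)|=(2n+1)m-2 = km+m-2$ with $k=2n$, so Lemma~\ref{lemma4}(2) applies. In its first case, $\theta(S)$ contains a product-one subsequence of length $2mn$, giving a subsequence $T\subset S$ of length $2mn\equiv 0\pmod{mn}$ with $\sigma(T)\in N$; one then splits $\theta(T)$ into length-$m$ product-one sub-blocks using Lemma~\ref{lemma4}(1) and picks a sub-combination whose $N$-product vanishes while preserving length divisibility by $mn$. In the exceptional case, $\theta(S)=\bar a^{[xm-1]}\bdot\bar b^{[ym-1]}$ with $x+y=2n+1$, $\langle\bar a\bar b^{-1}\rangle=C_m$, and $\prod_{2mn-2}(\theta(S))=C_m$; this rigid two-coset configuration, combined with the semidirect-product relation $yxy^{-1}=x^{s}$, permits an explicit construction of the required subsequence of length in $mn\mathbb{N}$.

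Parts~(1) and~(2) follow the same template. For part~(1), applying Lemma~\ref{lemma4}(1) to $\theta(S)$ of length $(n+1)m+m-2$ produces a subsequence of length $mn$ whose preimage $T\subset S$ satisfies $\sigma(T)\in N$; the $2m-2$ elements of $S\setminus T$ together with length-$m$ sub-blocks inside $T$ (obtained by further application of Lemma~\ref{lemma4} to $\theta(T)$) provide the degrees of freedom needed in $N\cong C_{mn}$ to shift $\sigma(T)$ to $0$, producing a product-one subsequence of length in $[1,mn]$. For part~(2), the length $2m+2mn-3$ exceeds that of (1) by $mn-1$; imitating the argument of (1) furnishes an intermediate product-one subsequence which is then completed to one of length exactly $mn$ by appending a suitably chosen length-$m$ sub-block whose $N$-product cancels the deficit.

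The main obstacle, common to all three parts, is reconciling the prescribed length constraint with the zero-product condition in $N \cong C_{mn}$. Extracting $C_m$-zero-sum sub-blocks via Lemma~\ref{lemma4} is straightforward, but the number of such sub-blocks typically falls far short of $\mathsf{D}(C_{mn})=mn$, so a naive application of Davenport's theorem inside $N$ does not close the argument. The resolution rests on the structural dichotomy of Lemma~\ref{lemma4}(2), especially its side clause $\prod_{km-2}(\theta(S))=C_m$, which supplies exactly the missing flexibility to realize any needed correction in $N$ while keeping the total length within the required set~$L$.
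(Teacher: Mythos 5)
There is a genuine gap, and it lies in your choice of quotient. Projecting onto $G/\langle x\rangle\cong C_m$ leaves the kernel $N\cong C_{mn}$, whose zero-sum invariants are far too large for the number of kernel-valued blocks you can extract. From $|S|=2m+mn-2$ (resp.\ $2m+2mn-3$, resp.\ $m+2mn-2$) the greedy use of Lemma~\ref{lemma4} over $C_m$ yields only about $n$ (resp.\ $2n$, resp.\ $2n-1$) disjoint blocks $T_i$ of length $m$ with $\sigma(T_i)\in N$, whereas guaranteeing a product-one combination of the $\sigma(T_i)$ in $C_{mn}$ would require on the order of $\mathsf D(C_{mn})=\eta(C_{mn})=mn$ or $\mathsf s(C_{mn})=2mn-1$ of them; and even if such a combination existed, selecting up to $mn$ blocks of length $m$ could produce a subsequence of length up to $m^2n$, violating the constraints $|T|\le mn$ and $|T|=mn$. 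Concretely for your part (3): with $m=2$, $n=3$ the $2n-1=5$ block-products could all equal a generator of $C_6$, and then no nonempty sub-collection of size divisible by $n=3$ has product one, so the block data alone cannot close the argument. The rescues you propose do not repair this: the clause $\prod_{km-2}(\theta(S))=C_m$ in Lemma~\ref{lemma4}(2) gives surjectivity of products in the \emph{quotient} $C_m$, not in the kernel $C_{mn}$ where the correction is needed; the $2m-2$ leftover terms of $S$ in part (1) need not lie in $N$ at all, so they supply no ``degrees of freedom'' inside $N$; and the ``explicit construction'' in the exceptional case of (3) is only asserted.

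The paper's proof is organized precisely so that every kernel encountered is small relative to the block lengths. It first maps $C_m\ltimes_{\varphi}C_{mn}$ onto $C_m\ltimes_{\varphi}C_m$ with kernel $C_n$, so that only $\eta(C_n)=n$ or $\mathsf s(C_n)=2n-1$ blocks are needed --- matching what can be extracted --- and $n$ blocks of length $\le m$ give total length $\le mn$ as required. The intermediate group $C_m\ltimes_{\varphi}C_m$ is then treated by induction on the number of prime divisors of $m$, using Lemma~\ref{lemma6} to produce a normal subgroup $\cong C_p\times C_p$ with quotient $C_{m/p}\ltimes_{\varphi}C_{m/p}$, the base case $C_p\ltimes_{\varphi}C_p\cong C_p\times C_p$, and the constants $\eta(C_p\times C_p)=3p-2$ and $\mathsf s(C_p\times C_p)=4p-3$ from Lemma~\ref{lemma2}. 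For part (3) the congruence condition on $|T|$ is encoded by passing to the auxiliary group $G\times C_{mn}$ and appending a fixed generator to each term, which converts the length condition into a product-one condition in a kernel $\cong C_n\times C_n$ (resp.\ $C_p^3$), where $\mathsf D(C_n\times C_n)=2n-1$ and $\mathsf D(C_p^3)=3p-2$ exactly match the available block counts. None of these ingredients appears in your proposal, so the argument as written does not go through.
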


\begin{proof}
The proofs of (1) and (2) are very similar, for simplicity of statement, we shall prove them simultaneously. We divide the proof into two steps:

{\bf Step 1.} We first prove the result for the case $C_m\ltimes_{\varphi} C_m$ and we proceed by induction on the number of prime divisors of $m$.

If $m=p$ is a prime, since $C_p\ltimes_{\varphi} C_p=C_p\times C_p$, then by Lemma \ref{lemma2}.(3) we get the desired result for both (1) and (2).

If $m$ is not a prime, we can write $m=tp$, where $p$ is the largest prime divisor of $m$. Then by Lemma \ref{lemma6} we have a homomorphism $C_m\ltimes_{\varphi} C_m\stackrel{\theta}{\rightarrow}C_t\ltimes_{\varphi} C_t$ and $\ker\theta \cong C_p\times C_p$.

Let $S=g_1\bdot \ldots \bdot g_{3m-2}$ (resp. $S=g_1\bdot \ldots \bdot g_{4m-3}$) be any sequence over $C_m\ltimes_{\varphi} C_m$ and $\theta(S)=\theta(g_1)\bdot \ldots \bdot \theta(g_{3m-2})$ (resp. $\theta(S)=\theta(g_1)\bdot \ldots \bdot \theta(g_{4m-3})$) be a sequence over $C_t\ltimes_{\varphi} C_t$. Since $|S|\geq 3t-2$ (resp. $|S|\geq 4t-3$), by induction, there exists a subsequence $T_1$ of $S$ such that $1\le|T_1|\leq t$ (resp. $|T_1|=t$) and $1\in\pi(\theta(T_1))$. By the above remark, we may assume that $1=\sigma(\theta(T_1))$ and therefore $\sigma(T_1)\in \ker\theta$.

If $|S{T_1}^{-1}|\geq 3t-2$ (resp. $|S{T_1}^{-1}|\geq 4t-3$), then there exists a subsequence $T_2$ of $S{T_1}^{-1}$ such that $1\le|T_2|\leq t$ (resp. $|T_2|=t$) and $\sigma(T_2)\in \ker\theta$, continuing this process, we can find disjoint subsequences $T_1,\ldots,T_r$ such that $1\le|T_i|\leq t$ (resp. $|T_i|=t$) and $\sigma(T_i)\in \ker\theta$ until $|S{T_1}^{-1}\bdot\bdot\bdot{T_r}^{-1}|\leq 3t-3$ (resp. $|S{T_1}^{-1}\bdot\bdot\bdot{T_r}^{-1}|\leq 4t-4$). Then $r\geq \lceil\frac{3m-2-3t+3}{t}\rceil=3p-2$ (resp. $r\geq \lceil\frac{4m-3-4t+4}{t}\rceil=4p-3$). Since $\sigma(T_i)\in \ker\theta\cong C_p\times C_p$ for $i\in [1,r]$, together with Lemma \ref{lemma2} that $\eta(C_p\times C_p)=3p-2$ (resp. $\mathsf s(C_p\times C_p)=4p-3$), there exists $T_{i_1},\ldots,T_{i_k}$ such that $1\leq k\leq p$ and $\sigma(T_{i_1})\bdot \ldots \bdot \sigma(T_{i_k})$ (resp. there exists $T_{i_1},\ldots,T_{i_p}$ such that $\sigma(T_{i_1})\bdot \ldots \bdot \sigma(T_{i_p})$) is a product-one sequence over $C_p\times C_p$. Thus $T=T_{i_1}\bdot \ldots \bdot T_{i_k}$ (resp. $T=T_{i_1}\bdot \ldots \bdot T_{i_p}$) is a product-one subsequence over $G$ of length $|T|=|T_{i_1}|+\cdots+|T_{i_k}|\leq tp=m$ (resp. $|T|=|T_{i_1}|+\cdots+|T_{i_p}|=tp=m$ ). This completes the proof of the case $C_m\ltimes_{\varphi} C_m$.

{\bf Step 2.} For the case $G\cong C_m\ltimes_{\varphi} C_{mn}$, we have the following homomorphism $$C_m\ltimes_{\varphi} C_{mn}\stackrel{\psi}{\rightarrow}C_m\ltimes_{\varphi} C_m$$ with $\ker\psi=C_n$. Then let $S=g_1\bdot \ldots \bdot g_{2m+mn-2}$ (resp. $S=g_1\bdot \ldots \bdot g_{2m+2mn-3}$) be any sequence over $C_m\ltimes_{\varphi} C_{mn}$ and $\psi(S)=\psi(g_1)\bdot \ldots \bdot \psi(g_{2m+mn-2})$ (resp. $\psi(S)=\psi(g_1)\bdot \ldots \bdot \psi(g_{2m+2mn-3})$) be a sequence over $C_m\ltimes_{\varphi} C_m$. Since $|S|\geq 3m-2$ (resp. $|S|\geq 4m-3$), by the above case, there exists a subsequence $T_1$ of $S$ such that $1\le|T_1|\leq m$ (resp. $|T_1|=m$) and $\sigma(T_1)\in \ker\psi$. If $|S{T_1}^{-1}|\geq 3m-2$ (resp. $|S{T_1}^{-1}|\geq 4m-3$), then there exists a subsequence $T_2$ of $S{T_1}^{-1}$ such that $1\le|T_2|\leq m$ (resp. $|T_2|=m$) and $\sigma(T_2)\in \ker\psi$, continuing this process, we can find disjoint subsequences $T_1,\ldots,T_r$ such that $1\le|T_i|\leq m$ (resp. $|T_i|=m$) and $\sigma(T_i)\in \ker\psi$ until $|S{T_1}^{-1}\bdot\bdot\bdot{T_r}^{-1}|\leq 3m-3$ (resp. $|S{T_1}^{-1}\bdot\bdot\bdot{T_r}^{-1}|\leq 4m-4$). Then $r\geq \lceil\frac{2m+mn-2-3m+3}{m}\rceil=n$ (resp. $r\geq \lceil\frac{2m+2mn-3-4m+4}{m}\rceil=2n-1$). Since $\sigma(T_i)\in \ker\psi\cong C_n$ for $i\in [1,r]$ and $\eta(C_n)=n$ (resp. $\mathsf s(C_n)=2n-1$) by Lemma \ref{lemma2}, there exists $T_{i_1},\ldots,T_{i_k}$ such that $1\leq k\leq n$ and $\sigma(T_{i_1})\bdot \ldots \bdot \sigma(T_{i_k})$ (resp. there exists $T_{i_1},\ldots,T_{i_n}$ such that $\sigma(T_{i_1})\bdot \ldots \bdot \sigma(T_{i_n})$) is a product-one sequence over $C_n$. Thus $T=T_{i_1}\bdot \ldots \bdot T_{i_k}$ (resp. $T=T_{i_1}\bdot \ldots \bdot T_{i_n}$) is product-one subsequence over $G$ of length $|T|=|T_{i_1}|+\cdots+|T_{i_k}|\leq mn$ (resp. $|T|=|T_{i_1}|+\cdots+|T_{i_n}|=mn$). This completes the proof of both (1) and (2).

{\bf (3)} The proof of (3) also follows the same two steps as above, meanwhile it makes use of the above result.

{\bf Step 1.} We first prove the result for the case $C_m\ltimes_{\varphi} C_m$ and we proceed by induction on the number of prime divisors of $m$.

If $m=p$ is a prime, then since $C_p\ltimes_{\varphi} C_p=C_p\times C_p$, we get the desired result from Lemma \ref{lemma2}.

If $m$ is not a prime, we can write $m=tp$, where $p$ is the largest prime divisor of $m$. Then by Lemma \ref{lemma6} we have a homomorphism $C_m\ltimes_{\varphi} C_m\stackrel{\theta}{\rightarrow}C_t\ltimes_{\varphi} C_t$ and $\ker\theta \cong C_p\times C_p$.

Let $S=g_1\bdot \ldots \bdot g_{3m-2}$ be any sequence over $C_m\ltimes_{\varphi} C_m$. We set $H=G\times C_{m}=G\times \langle e\rangle $. We can construct the following homomorphism
$$(C_m\ltimes_{\varphi} C_m)\times C_m\stackrel{\psi}{\rightarrow}(C_t\ltimes_{\varphi} C_t)\times C_t$$
$$(g,e^h)\mapsto(\theta(g),e^{hp})$$
where $g\in C_m\ltimes C_m$, $e^h\in C_m$.
It can be easily checked that $\ker\psi\cong C_p\times C_p\times C_p$. Set $S^H=g_1e\bdot \ldots \bdot g_{3m-2}e$, thus it suffices to prove that $S^H$ has a non-empty product-one subsequence. We have $\theta(S)=\theta(g_1)\bdot \ldots \bdot \theta(g_{3m-2})$ be a sequence over $C_t\ltimes_{\varphi} C_t$. Since $|\theta(S)|\geq 4t-3$, there exists by part (2) that we have proved above a subsequence $T_1$ of $S$ such that $|T_1|=t$ and $\sigma(T_1)\in \ker\theta$, if $|ST_1^{-1}|\geq 4t-3$, then there exists a subsequence $T_2$ of $ST_1^{-1}$ such that $|T_2|=t$ and $\sigma(T_2)\in \ker\theta$, continuing this process, we can find disjoint subsequences $T_1,\ldots,T_r$ such that $|T_i|=t$ and $\sigma(T_i)\in \ker\theta$ until $|ST_1^{-1}\bdot\bdot\bdot{T_r^{-1}}|\leq 4t-4$. Then $r\geq \lceil\frac{3m-2-4t+4}{t}\rceil=3p-3$ and at least we have $T_1,\ldots,T_{3p-3}$ such that $|T_i|=t$ and $\sigma(T_i)\in \ker\theta$ for $1\le i\le 3p-3$. Since $|ST_1^{-1}\cdots{T_{3p-3}^{-1}}|=3m-2-(3p-3)t=3t-2$, and by induction, $ST_1^{-1}\bdot \bdot \bdot{T_{3p-3}^{-1}}$ contains a product-one subsequence $J$ of length $|J|\equiv0\pmod{t}$ and $\sigma(J)\in \ker\theta$, by renumbering the indices, we denote $J=T_{3p-2}$. We set $W=T_1^H\bdot \ldots \bdot T_{3p-2}^H$ (the definition of $T_i^H$ is the same as $S^H$), then $U=\sigma(T_1^H)\bdot \ldots \bdot \sigma(T_{3p-2}^H)$ is a sequence over $\ker\psi\cong C_p\times C_p\times C_p$ of length $|U|=3p-2$. By Lemma \ref{lemma2} stating that $\mathsf D(C_p\times C_p\times C_p)=3p-2$, we can find a product-one subsequence $V=\sigma(T_{i_1}^H)\bdot \ldots \bdot \sigma(T_{i_k}^H)$ of $U$ for some $k\ge 1$ and $T^H=T_{i_1}^H\bdot \ldots \bdot T_{i_k}^H$ is a product-one subsequence of $S^H$. Therefore, by our construction, $T=T_{i_1}\bdot \ldots \bdot T_{i_k}$ is a product-one subsequence of $S$ such that the length of $T$ satisfies $|T|\equiv0\pmod{m}$. This completes the proof of the case $C_m\ltimes_{\varphi} C_m$.

{\bf Step 2.} For the case $G\cong C_m\ltimes_{\varphi} C_{mn}$, we have the following homomorphism $$C_m\ltimes_{\varphi} C_{mn}\stackrel{\theta}{\rightarrow}C_m\ltimes_{\varphi} C_m$$ with $\ker\theta\cong C_n$.
Let $S=g_1\bdot \ldots \bdot g_{m+2mn-2}$ be any sequence of length $m+2mn-2$ over $G$. We set $H=G\times C_{mn}=G\times \langle e\rangle $. We can construct the following homomorphism
$$(C_m\ltimes_{\varphi} C_{mn})\times C_{mn}\stackrel{\psi}{\rightarrow}(C_m\ltimes_{\varphi} C_m)\times C_m$$
$$(g,e^h)\mapsto(\theta(g),e^{hn})$$
where $g\in C_m\ltimes C_{mn}$, $e^h\in C_{mn}$.
It can be easily checked that $\ker\psi\cong C_n\times C_n$. Set $S^H=g_1e\bdot \ldots \bdot g_{m+2mn-2}e$, thus it suffices to prove that $S^H$ has a non-empty product-one subsequence. We have $\theta(S)=\theta(g_1)\bdot \ldots \bdot \theta(g_{m+2mn-2})$ be a sequence over $C_m\ltimes_{\varphi} C_m$. Since $|\theta(S)|\geq 4m-3$,  there exists by part (2) that we have proved above a subsequence $T_1$ of $S$ such that $|T_1|=m$ and $\sigma(T_1)\in \ker\theta$, if $|ST_1^{-1}|\geq 4m-3$, then there exists a subsequence $T_2$ of $ST_1^{-1}$ such that $|T_2|=m$ and $\sigma(T_2)\in \ker\theta$, continuing this process, we can find disjoint subsequences $T_1,\ldots,T_r$ such that $|T_i|=m$ and $\sigma(T_i)\in \ker\theta$ until $|ST_1^{-1}\bdot\bdot\bdot{T_r^{-1}}|\leq 4m-4$. Then $r\geq \lceil\frac{m+2mn-2-4m+4}{m}\rceil=2n-2$ and at least we have $T_1,\ldots,T_{2n-2}$ such that $|T_i|=n$ and $\sigma(T_i)\in \ker\theta$ for $1\le i\le 2n-2$. Since $|ST_1^{-1}\cdots{T_{2n-2}^{-1}}|=m+2mn-2-(2n-2)m=3m-2$, and by the above case, $ST_1^{-1}\bdot \bdot \bdot{T_{2n-2}^{-1}}$ contains a product-one subsequence $J$ of length $|J|\equiv0\pmod{m}$ and $\sigma(J)\in \ker\theta$, by renumbering the indices, we denote $J=T_{2n-1}$. We set $W=T_1^H\bdot \ldots \bdot T_{2n-1}^H$, then $U=\sigma(T_1^H)\bdot \ldots \bdot \sigma(T_{2n-1}^H)$ is a sequence over $\ker\psi\cong C_n\times C_n$ of length $|U|=2n-1$. By Lemma \ref{lemma2} stating that $\mathsf D(C_n\times C_n)=2n-1$, we can find a product-one subsequence $V=\sigma(T_{i_1}^H)\bdot \ldots \bdot \sigma(T_{i_k}^H)$ of $U$ for some $k\ge 1$ and $T^H=T_{i_1}^H\bdot \ldots \bdot T_{i_k}^H$ is a product-one subsequence of $S^H$. Therefore, by our construction, $T=T_{i_1}\bdot \ldots \bdot T_{i_k}$ is a product-one subsequence of $S$ such that the length of $T$ satisfies $|T|\equiv0\pmod{mn}$. This completes the proof.
\end{proof}

{\sl Proof of Theorem \ref{theorem2}}.
Firstly, it follows from Lemma \ref{lemma7} that $\mathsf s_{mn\mathbb N}(G)\le m+2mn-2$. Let $S=x^{[m-1]}\bdot y^{[mn-1]}\bdot 1^{[mn-1]}$ be a sequence of length $m+2mn-3$ over $G$. We can easily check that $S$ contains no product-one subsequence $T$ of length $|T|\equiv 0\pmod{mn}$. Indeed, since $x^uy^v=y^{s^uv}x^u$ for $u\ge 0$, $v\ge 0$, that means the power of $x$ does not change and the subsequence $x^{[m-1]}\bdot y^{[mn-1]}$ contains no product-one subsequence. Therefore $\mathsf s_{mn\mathbb N}(G)\ge m+2mn-2$.

Next, obviously, if $\mathsf e(G)=mn$, then it follows from Lemma \ref{lemma7} that $\eta(G)\le 2m+mn-2$ and $\mathsf s(G)\le 2m+2mn-3$.
\qed

\section{On the Erd\H{o}s-Ginzburg-Ziv theorem}

In this section, we will prove Theorem \ref{theorem1}.

{\sl Proof of Theorem \ref{theorem1}}.
If $m=1$, then the desired result follows from Lemma \ref{lemma1}. So for the rest we may assume that $m\ge 2$.

Let $S$ be a sequence over $G$ of length $|G|+\frac{|G|}{m}+m-2$, we will show that $S$ has a product-one subsequence of length $|G|$. Let $\theta$ be the homomorphism
$$\theta:G\rightarrow G/N\cong C_m\ltimes_{\varphi} C_m$$
where $\ker\theta\cong N$. Since $G/N\cong C_m\ltimes_{\varphi} C_m$, and by Lemma \ref{lemma7}.(2), we can repeatedly remove the product-one subsequences from $\theta(S)$ of length $m$ until $|\theta(S)|\le 4m-4$. In other words, we obtain a factorization $S=S_1\bdot\ldots\bdot S_rS'$ with
$$|S_i|=m\text{ and }\sigma(S_i)\in \ker\theta\text{ for }1\le i\le r,\text{ and }|S'|\le 4m-4.$$
Consequently, $$r\ge\lceil\frac{|G|+\frac{|G|}{m}+m-2-4m+4}{m}\rceil=
\frac{|G|}{m}+\frac{|G|}{m^2}-2.$$
If $N$ is not a cyclic subgroup, then by Lemma \ref{lemma5} and the fact that $\frac{|G|}{m}
+\frac{|G|}{m^2}-2\ge m\frac{|G|}{m^2}
+\frac{3}{4}\frac{|G|}{m^2}-1$, the sequence $\sigma(S_1)\bdot\ldots\bdot\sigma(S_{\frac{|G|}{m}
+\frac{|G|}{m^2}-2})$ contains a product-one subsequence of length $\frac{|G|}{m}$, and the proof follows.

Otherwise, we may assume that $N$ is a cyclic subgroup of $G$. Let $T=SS_1^{-1}\bdot \ldots \bdot S_{\frac{|G|}{m}
+\frac{|G|}{m^2}-2}^{-1}$, then $|T|=3m-2$ and $\theta(T)$ contains a product-one subsequence of length $m$ or $2m$ in $C_m\ltimes_{\varphi} C_m$ by Lemma \ref{lemma7}.(3). We distinguish two cases.

{\bf Case 1.} If $T$ contains a subsequence $S_{\frac{|G|}{m}
+\frac{|G|}{m^2}-1}$ of length $m$ with $\sigma(S_{\frac{|G|}{m}
+\frac{|G|}{m^2}-1})\in \ker\theta$.

By Lemma \ref{lemma4}(1), the sequence $\sigma(S_1)\bdot \ldots \bdot\sigma(S_{\frac{|G|}{m}
+\frac{|G|}{m^2}-1})$ over $N$ contains a product-one subsequence of length $\frac{|G|}{m}$. By rearrangement we may assume that $\sigma(S_1)\bdot \ldots \bdot\sigma(S_{\frac{|G|}{m}})=1$ then $J=S_1\bdot \ldots \bdot S_{\frac{|G|}{m}}$ is a product-one subsequence over $G$ of length $|G|$.

{\bf Case 2.} $T$ contains no subsequence $T'$ of length $m$ with $\sigma(T')\in \ker\theta$.

Therefore $T$ contains a subsequence $J$ of length $2m$ with $\sigma(J)\in \ker\theta$. Let $W=\sigma(S_1)\bdot \ldots \bdot\sigma(S_{\frac{|G|}{m}
+\frac{|G|}{m^2}-2})$, then $W$ is a sequence of length $\frac{|G|}{m}
+\frac{|G|}{m^2}-2$ over $C_{\frac{|G|}{m^2}}$. If $W$ contains a product-one subsequence of length $\frac{|G|}{m}$, then we are done.

Otherwise, from Lemma \ref{lemma4}.(2) it follows that $\prod_{\frac{|G|}{m}-2}(W)=C_{\frac{|G|}{m^2}}$, thus $\sigma(J)^{-1}\in \prod_{\frac{|G|}{m}-2}(W)$ and $\sigma(S_{i_1})\bdot \ldots \bdot\sigma(S_{i_{\frac{|G|}{m}-2}})\sigma(J)=1$ for $1\le i_1<\cdots<i_{\frac{|G|}{m}-2}\le\frac{|G|}{m}
+\frac{|G|}{m^2}-2$. Hence $S_{i_1}\bdot \ldots \bdot S_{i_{\frac{|G|}{m}-2}}J$ is a product-one subsequence of length $(\frac{|G|}{m}-2)m+2m=|G|$ over $G$. This completes the proof of the first part of the theorem.

In particular, if $G\cong C_m\ltimes_{\varphi} C_{mn}$, where $m,n$ are positive integers, then we may assume that $G$ is generated by two elements $x,y$ such that $\langle x\rangle\cap\langle y\rangle=1$, where ord$(x)=m$, ord$(y)=mn$ and $xyx^{-1}=y^s$ with $1\le s\le mn-1$. Note that $G/\langle x^m\rangle\cong C_m\ltimes_{\varphi} C_m$.

Let $S=x^{[m-1]}\bdot y^{[mn-1]}$ be a sequence over $G$. We can easily check that $S$ contains no product-one subsequence, since $x^uy^v=y^{s^uv}x^u$ for $u\ge 0$, $v\ge 0$, that means the power of $x$ does not change. Then by Lemma \ref{lemma1} and the above result we get
$$nm^2+nm+m-2\le nm^2+\mathsf d(G)\le \mathsf E(G)\le nm^2+nm+m-2,$$
which completes the proof.
\qed

\section{Noether number for $C_m\ltimes_{\varphi} C_{mn}$}

In this section, we shall determine the Noether number for $C_m\ltimes_{\varphi} C_{mn}$. Firstly, we recall the detailed definition of the Noether number.

Let $G$ be a finite group and $\rho: G\rightarrow GL(V)$ be a finite dimensional linear representation of $G$ over a field $\mathbb{F}$ of characteristic which is not dividing the group order $|G|$. Let $\mathbb{F}[V]$ denote the graded algebra of polynomial functions on $V$. We can regard $\mathbb{F}[V]$ as the symmetric algebra on $V^{*}$, the dual space of $V$. In other words, if $z_1,\ldots,z_n\in V^*$ is a basis, then $\mathbb{F}[V]$ is just the polynomial ring $\mathbb{F}[z_1,\ldots,z_n]$. The elements in $\mathbb{F}[V]$ are the homogeneous polynomials in the linear forms $z_1,\ldots,z_n$ with coefficient in $\mathbb F$. Since $G$ has a natural action on $V$, i.e., through the representation $\rho$, thus we can view $V$ as a $G$-module. Moreover we can induce a right action of $G$ on $V^*$ as follows: $$x^g(v)=x(g\cdot v)=x(\rho(g)v).$$
Therefore, this action can be naturally extended to an action on $\mathbb{F}[V]$. The central topic of invariant theory is to study the algebra of polynomial invariants which is defined as follows:
$$\mathbb{F}[V]^G=\{f\in\mathbb{F}[V]\text{ }|\text{ }f^g=f,\text{ for all }g\in G\}.$$

In the following, we assume that all the representations of $G$ are over a fixed base field $\mathbb F$ of characteristic which is not dividing the group order $|G|$. Now we can give the definition of the Noether number $\beta(G)$.

\begin{definition}
We define
$$\beta(G,V):=\min\{s\in\mathbb N\text{ }|\text{ }\mathbb{F}[V]^G\text{ is generated by invariants of degree }\le s\},$$
and
$$\beta(G):=\max\{\beta(G,V)\text{ }|\text{ }\rho:G\rightarrow GL(V)\text{ is a finite dimensional representation}\}.$$
\end{definition}

Also, we recall the following generalizations of Noether number and Davenport constant.

\begin{definition}{\rm(\cite{CD2})}
We define
$$\beta_k(G,V)=\min\{s\in \mathbb N\text{ }|\text{ }\mathbb F[V]^G \text{ is generated as an }(\mathbb F[V]^G)^k\text{-algebra by }\mathbb F[V]^G_{\le s}\},$$
and
$$\beta_k(G):=\max\{\beta_k(G,V)\text{ }|\text{ }\rho:G\rightarrow GL(V)\text{ is a finite dimensional representation}\}.$$
\end{definition}
Here $\beta_k(G)$ is called the $generalized$ $Noether$ $number$ of $G$.

\begin{definition}{\rm(\cite{HK})}
Let $G$ be a finite abelian group, we define $\mathsf D_k(G)$ to be the smallest integer $t$ such that any sequence $S$ over $G$ of length $|S|\ge t$ has $k$ non-empty disjoint product-one subsequences $T_1,\cdots,T_k$.
\end{definition}
Here $\mathsf D_k(G)$ is called the $generalized$ $Davenport$ $constant$ of $G$.

\begin{lemma}\label{Dk}
We have the following:
\begin{enumerate}

\item Let $G$ be a finite abelian group, then $\beta_k(G)=\mathsf D_k(G)$;

\item Let $p$ be a prime and $G=C_p\times C_p$, then $\mathsf D_k(G)=kp+p-1$.

\end{enumerate}
\end{lemma}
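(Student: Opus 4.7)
The two parts call for quite different techniques, so I would handle them separately.

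For part (1), my plan is to exploit the fact that when $G$ is a finite abelian group and $|G|$ is invertible in $\mathbb F$, any finite-dimensional $G$-module $V$ splits into one-dimensional characters. Choosing a basis $z_1,\ldots,z_n$ of $V^*$ with corresponding characters $\chi_1,\ldots,\chi_n\in\widehat G\cong G$, the algebra $\mathbb F[V]^G$ has a vector-space basis consisting of the invariant monomials $z_1^{a_1}\cdots z_n^{a_n}$, in natural bijection with product-one sequences $\chi_1^{[a_1]}\bdot\ldots\bdot\chi_n^{[a_n]}$ over $\widehat G$. For the upper bound $\beta_k(G,V)\le \mathsf D_k(G)$, I would observe that any invariant monomial of degree $\ge \mathsf D_k(G)$ corresponds to a sequence carrying $k$ pairwise disjoint nonempty product-one subsequences; this yields a factorization of the monomial as a product of $k$ nonconstant invariant monomials times one more invariant, placing it in $(\mathbb F[V]^G_+)^k$. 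For the reverse inequality $\beta_k(G)\ge \mathsf D_k(G)$, I would take an extremal sequence $S_0$ over $G$ of length $\mathsf D_k(G)-1$ admitting at most $k-1$ disjoint nonempty product-one subsequences, and build a representation $V_0$ as the direct sum of the one-dimensional characters corresponding to the terms of $S_0$. The invariant monomial attached to $S_0$ then cannot lie in $(\mathbb F[V_0]^G_+)^k$, since any such factorization would translate back into $k$ disjoint nonempty product-one subsequences of $S_0$.

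For part (2), I would proceed by induction on $k$. The base case $k=1$ is $\mathsf D_1(C_p\times C_p)=\mathsf D(C_p\times C_p)=2p-1$, which is Lemma \ref{lemma2}(3). For the lower bound $\mathsf D_k(G)\ge kp+p-1$, I would exhibit the sequence $e_1^{[p-1]}\bdot e_2^{[kp-1]}$ of length $kp+p-2$, where $\{e_1,e_2\}$ is a basis of $C_p\times C_p$: every nonempty product-one subsequence must use both coordinates in multiples of $p$, so the $e_1$-part is empty, and the $kp-1$ copies of $e_2$ admit at most $k-1$ disjoint blocks of length $p$. For the inductive step $k\ge 2$, I would invoke $\eta(C_p\times C_p)=3p-2$ from Lemma \ref{lemma2}(3): any sequence $S$ of length $\ge kp+p-1\ge 3p-1$ has a nonempty product-one subsequence $T_1$ with $|T_1|\le p$, and $|S T_1^{-1}|\ge (k-1)p+p-1$, so the inductive hypothesis produces $k-1$ further disjoint product-one subsequences.

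The main obstacle I anticipate is the lower-bound half of part (1). To rule out that the extremal monomial lies in $(\mathbb F[V_0]^G_+)^k$, one has to exploit the multihomogeneity of $\mathbb F[V_0]$ with respect to the character grading in order to reduce any candidate factorization by arbitrary invariants to a factorization by invariant monomials, at which point the bijection with disjoint product-one splittings of $S_0$ closes the argument. Once this standard reduction is carried out, both bounds match exactly and part (2) is a direct induction using the $\eta$ and $\mathsf D$ values provided by Lemma \ref{lemma2}(3).
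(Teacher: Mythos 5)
The paper's own ``proof'' of this lemma is just a pair of citations (Proposition 4.7.4 of \cite{CDG} for (1) and Theorem 6.1.5 of \cite{GH} for (2)), so any self-contained argument is necessarily different in presentation; yours is essentially the standard proof lying behind those citations. Part (2) is correct and complete as you state it: the lower bound from $e_1^{[p-1]}\bdot e_2^{[kp-1]}$ and the induction on $k$ using $\eta(C_p\times C_p)=3p-2$ to peel off a zero-sum block of length at most $p$ is exactly how the $\mathsf D_k$ value for rank-two groups is obtained in \cite{GH}. For part (1), the reduction to invariant monomials via the character grading, and the dictionary between factorizations of an invariant monomial and decompositions of the associated sequence into disjoint product-one subsequences, is the right (and standard) mechanism; one should only add that the diagonalization requires $\mathbb F$ to contain enough roots of unity, which is handled by passing to $\overline{\mathbb F}$ in the non-modular case.

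There is, however, one concrete step in your lower bound for (1) that fails as written. An extremal sequence $S_0$ of length $\mathsf D_k(G)-1$ admitting fewer than $k$ disjoint nonempty product-one subsequences is in general \emph{not} a product-one sequence, so ``the invariant monomial attached to $S_0$'' is not an element of $\mathbb F[V_0]^G$ and cannot witness anything about $\beta_k(G,V_0)$; moreover, even if it were invariant it would only give a bound one short of $\mathsf D_k(G)$. The standard repair is to pass to $S_1=S_0\bdot\sigma(S_0)^{-1}$, a product-one sequence of length $\mathsf D_k(G)$: any decomposition of $S_1$ into $k+1$ disjoint nonempty product-one subsequences would leave at least $k$ of them inside $S_0$, a contradiction, so the invariant monomial of $S_1$ (over the module built from the terms of $S_1$) is not a product of $k+1$ positive-degree invariants, giving $\beta_k(G)\ge \mathsf D_k(G)$. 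A matching caution applies to your upper bound: with the usual convention that $\beta_k(G,V)$ is the top degree of $\mathbb F[V]^G_+/(\mathbb F[V]^G_+)^{k+1}$, one must show that an invariant monomial of degree exceeding $\mathsf D_k(G)$ lies in $(\mathbb F[V]^G_+)^{k+1}$, which is done by first setting one term aside, extracting $k$ disjoint product-one blocks from the rest, and observing that the (now nonempty) remainder is again product-one. These are routine adjustments, but as stated the argument proves an off-by-one variant of the claim and rests on a monomial that need not be invariant.
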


\begin{proof}
(1) See Proposition 4.7.4, \cite{CDG}. (2) See Theorem 6.1.5, \cite{GH}
\end{proof}

We will employ the following crucial reduction lemma due to Cziszter and Domokos, see \cite{CD2} or \cite{CD3}.

\begin{lemma}\label{lemma8}{\rm(\cite{CD2})}
Let $H$ be a subgroup of $G$ and $V$ a $G$-module.
\begin{enumerate}
\item We have $\beta_k(\mathbb{F}[V]_{+},\mathbb{F}[V]^G)\le
    \beta_{k[G:H]}(\mathbb[V]_{+},\mathbb{F}[V]^H).$ In particular,
    $$\beta_k(G,V)\le\beta_{k[G:H]}(H,V);$$

\item If $H$ is normal in $G$, then $\beta_k(G,V)\le\beta_{\beta_k(G/H)}(H,V)$;

\item Let $H$ be a normal subgroup of a finite group $G$ with $G/H$ abelian. Then for all positive integers $k$ we have the inequality $$\beta_k(G)\ge\beta_k(H)+\mathsf D(G/H)-1.$$
\end{enumerate}
\end{lemma}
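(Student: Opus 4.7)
The plan is to address parts (1), (2), (3) in sequence. Parts (1) and (2) are upper bounds relying on the relative transfer map from $H$-invariants to $G$-invariants, while (3) is a lower bound established by an explicit $G$-module construction combined with a product-one free character sequence. I expect the regrouping step in (1) to be the main technical obstacle.

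\textbf{For part (1)}, I would use the relative transfer
\[
\mathrm{tr}^G_H\colon \mathbb{F}[V]^H \to \mathbb{F}[V]^G, \qquad f \mapsto \sum_{gH \in G/H} f^g,
\]
which is $\mathbb{F}[V]^G$-linear, graded, and surjective up to the factor $[G:H]$ in the non-modular case. Pick homogeneous generators $f_1,\dots,f_r$ of $\mathbb{F}[V]^H$ as an $(\mathbb{F}[V]^H)^{k[G:H]}$-algebra of degree $\le s := \beta_{k[G:H]}(H,V)$. For $f \in \mathbb{F}[V]^G$ of degree exceeding $s$, expand $f$ in the $f_i$'s and apply $\tfrac{1}{[G:H]}\mathrm{tr}^G_H$, which fixes $f$. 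Each expansion contains at least $k[G:H]$ positive-degree $H$-invariant factors, which can be pooled into $k$ blocks of $[G:H]$ factors each; applying the transfer turns each block into a single $G$-invariant, so $f$ becomes a $(\mathbb{F}[V]^G)^k$-combination of transferred monomials of degree $\le s$. The main obstacle is verifying that the regrouping commutes with the transfer and indeed lands in $\mathbb{F}[V]^G$ rather than merely $\mathbb{F}[V]^H$.

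\textbf{For part (2)}, I would iterate (1) through the quotient. Set $A := \mathbb{F}[V]^H$, on which $G/H$ acts with $A^{G/H} = \mathbb{F}[V]^G$. Let $t := \beta_k(G/H)$ and let $f_1,\dots,f_r$ be homogeneous generators of $A$ as an $A^t$-algebra of degree $\le \beta_t(H,V)$. Since $G/H$ acts linearly on each finite-dimensional graded piece of $A$, applying the definition of $\beta_k(G/H)$ to truncations of $A$ shows that $A^{G/H}$ is generated as an $(A^{G/H})^k$-algebra by $G/H$-averaged monomials of degree $\le t$ in the $f_i$, each of total degree $\le \beta_t(H,V)$. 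This gives $\beta_k(G,V) \le \beta_t(H,V) = \beta_{\beta_k(G/H)}(H,V)$. The infinite-dimensionality of $A$ is handled by degree-by-degree truncation to finite-dimensional $G/H$-submodules.

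\textbf{For part (3)}, I would build a $G$-module realizing the lower bound. Take a $G$-module $W$ with $\beta_k(H,W) = \beta_k(H)$, and fix a homogeneous $f \in \mathbb{F}[W]^H$ of degree $\beta_k(H)$ that does not lie in the $(\mathbb{F}[W]^H)^k$-ideal generated by $H$-invariants of strictly smaller degree. After replacing $f$ by a weight component under the residual $G/H$-action on $\mathbb{F}[W]^H$, I may assume $f$ transforms by a single character $\chi_f \in \widehat{G/H}$. Choose a product-one free sequence $\chi_1,\dots,\chi_{\mathsf D(G/H)-1}$ in $\widehat{G/H}\cong G/H$ whose total product equals $\chi_f^{-1}$, which exists after translating a maximal product-one free sequence. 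Let $L := L_{\chi_1}\oplus\cdots\oplus L_{\chi_{\mathsf D(G/H)-1}}$ be the inflated $G$-module, set $V := W\oplus L$, and let $z_i$ be a coordinate on $L_{\chi_i}$. Then $F := f\cdot z_1\cdots z_{\mathsf D(G/H)-1}$ is a $G$-invariant of degree $\beta_k(H)+\mathsf D(G/H)-1$, and product-one freeness of the chosen character sequence rules out any decomposition of $F$ as a $(\mathbb{F}[V]^G)^k$-combination of strictly lower-degree $G$-invariants, yielding the desired inequality $\beta_k(G)\ge \beta_k(H)+\mathsf D(G/H)-1$.
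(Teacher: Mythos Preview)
The paper does not prove this lemma at all: it is quoted verbatim from Cziszter--Domokos \cite{CD2} (see also \cite{CD3}) and used as a black box, so there is no argument in the paper to compare your proposal against.

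For what it is worth, your sketch follows the strategy of the cited source. Two points would need tightening if you actually wanted to carry it out. In part~(1), the step ``pool the $k[G:H]$ factors into $k$ blocks and apply the transfer to each block to get a $G$-invariant'' is not literally correct: the transfer of a product is not a product of transfers, so one does not obtain $k$ separate $G$-invariant factors that way. The actual mechanism in \cite{CD2} passes through an inequality for the module-theoretic quantity $\beta_k(\mathbb{F}[V]_+,\mathbb{F}[V]^G)$ and uses that $(\mathbb{F}[V]^G_+)\,\mathbb{F}[V]^H$ has finite colength controlled by $[G:H]$, rather than a naive regrouping. In part~(3), the claim that one can choose a product-one free sequence of length $\mathsf D(G/H)-1$ in $\widehat{G/H}$ with \emph{prescribed} total product $\chi_f^{-1}$ is not generally true (for $G/H\cong C_n$, such sequences have product a generator only), and ``translating'' a product-one free sequence need not keep it product-one free. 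The fix, as in \cite{CD2}, is to reverse the order: first fix a maximal product-one free sequence with product $\chi$, and then arrange---by twisting $W$ or by selecting the weight component---that $f$ is a $\chi^{-1}$-semi-invariant.
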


With the help of the above lemma, we may use the induction method to prove our result.

\begin{lemma}\label{lemma9}
Let $V$ be any $G$-module, then $\beta_k(C_m\ltimes_{\varphi} C_m,V)\le km+m-1.$
\end{lemma}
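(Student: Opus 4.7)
The plan is to induct on the number of prime divisors of $m$, using the chain of normal subgroups produced by Lemma \ref{lemma6}, and reducing to the abelian case $C_p\times C_p$, where the generalized Noether number is fully known by Lemma \ref{Dk}.

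For the base case, take $m=p$ prime. Then $C_p\ltimes_{\varphi} C_p=C_p\times C_p$ is abelian, so by Lemma \ref{Dk} we have
\[
\beta_k(C_p\ltimes_{\varphi} C_p,V)\le\beta_k(C_p\times C_p)=\mathsf D_k(C_p\times C_p)=kp+p-1,
\]
which matches $km+m-1$.

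For the inductive step, write $m=tp$, where $p$ is the largest prime divisor of $m$. By Lemma \ref{lemma6}, $G=C_m\ltimes_{\varphi} C_m$ has a normal subgroup $N\cong C_p\times C_p$ with $G/N\cong C_t\ltimes_{\varphi} C_t$. Applying the Cziszter--Domokos reduction (Lemma \ref{lemma8}.(2)) to this normal subgroup gives
\[
\beta_k(G,V)\le\beta_{\beta_k(G/N)}(N,V).
\]
By the induction hypothesis applied to $C_t\ltimes_{\varphi} C_t$, we get $\beta_k(G/N)\le kt+t-1$. Since $\beta$ is monotone in its subscript (and $N$ is abelian, so Lemma \ref{Dk} applies), it follows that
\[
\beta_k(G,V)\le\beta_{kt+t-1}(N)=\mathsf D_{kt+t-1}(C_p\times C_p)=(kt+t-1)p+p-1=km+m-1,
\]
completing the induction.

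The only delicate points are (i) checking that the hypotheses of Lemma \ref{lemma8}.(2) are satisfied, i.e.\ that $N$ is normal in $G$ with $G/N\cong C_t\ltimes_{\varphi} C_t$ (already secured by Lemma \ref{lemma6}), and (ii) making sure we may pass from $\beta_k(G,V)$ for an arbitrary $V$ to a bound of the form $\beta_\ell(N)$, which is legitimate because Lemma \ref{lemma8}.(2) gives a bound on $\beta_k(G,V)$ in terms of $\beta_\ell(N,V)\le\beta_\ell(N)$ with $\ell=\beta_k(G/N)$. I do not expect a substantive obstacle here; the entire proof is a clean two-line induction once the key ingredients (Lemmas \ref{lemma6}, \ref{Dk}, and \ref{lemma8}) are in place.
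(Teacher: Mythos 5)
Your proposal is correct and follows essentially the same route as the paper: induction on the number of prime divisors of $m$, using Lemma \ref{lemma6} to produce the normal subgroup $N\cong C_p\times C_p$ for the largest prime $p\mid m$, then Lemma \ref{lemma8}.(2) together with $\beta_k(C_p\times C_p)=\mathsf D_k(C_p\times C_p)=kp+p-1$ from Lemma \ref{Dk}. Your explicit remark that one needs monotonicity of $\beta_j$ in the subscript $j$ to replace $\beta_k(G/N)$ by its inductive upper bound is a point the paper uses silently, so no further action is needed.
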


\begin{proof}
Let $m=p_1p_2\cdots p_l$ be the decomposition of $m$ into prime numbers, where $p_1\ge p_2\ge \cdots\ge p_l$. We shall prove by induction on the number of prime divisors of $m$. If $l=1$, then by Lemma \ref{Dk}, we have $\beta_k(C_m\ltimes_{\varphi} C_m,V)=\beta_k(C_{p_1}\times C_{p_1},V)\le\beta_k(C_{p_1}\times C_{p_1})=\mathsf D_k(C_{p_1}\times C_{p_1})=kp_1+p_1-1=km+m-1.$ Assume that the claim has been proven for the case $l=t$. When $l=t+1$, by Lemma \ref{lemma6}, $G$ contains a normal subgroup $N$, which is isomorphic to $C_{p_1}\times C_{p_1}$. Moreover
$G/N\cong C_{\frac{m}{p_1}}\ltimes_{\varphi} C_{\frac{m}{p_1}}$. Then by induction, $\beta_k(C_{\frac{m}{p_1}}\ltimes_{\varphi} C_{\frac{m}{p_1}},V)\le\frac{km}{p_1}+\frac{m}{p_1}-1$. Finally, by Lemma \ref{lemma8}(2), we have $\beta_k(C_m\ltimes_{\varphi} C_m,V)\le\beta_{\beta_k(C_{\frac{m}{p_1}}\ltimes_{\varphi} C_{\frac{m}{p_1}})}(C_{p_1}\ltimes_{\varphi} C_{p_1},V)=\beta_{\frac{km}{p_1}+\frac{m}{p_1}-1}
(C_{p_1}\ltimes_{\varphi} C_{p_1},V)
\le(\frac{km}{p_1}+\frac{m}{p_1}-1)p_1+p_1-1=km+m-1$.
\end{proof}

{\sl Proof of Theorem \ref{theorem3}}.
Let $V$ be any $G$-module. We will show that $\beta(G,V)\le mn+m-1$. Because $C_m\ltimes_{\varphi} C_{mn}$ has a subgroup $H=C_m\ltimes_{\varphi} C_m$, by Lemma \ref{lemma8}(1), we have $\beta(G,V)\le\beta_{[G:H]}(H,V)=\beta_n(C_m\ltimes_{\varphi} C_m,V)$. Then by Lemma \ref{lemma9}, we have $\beta(G,V)\le mn+m-1$. Consequently, $$\beta(G)=\sup_V\beta(G,V)\le mn+m-1.$$

On the other hand, since $G$ has a normal subgroup $H=C_{mn}$ and $G/H\cong C_m$, by Lemma \ref{lemma8}(3), we have
$$\beta(G)\ge\beta(H)+\mathsf D(G/H)-1=mn+m-1.$$ Therefore $\beta(G)=mn+m-1$.
\qed

After comparing this with the result in Theorem \ref{theorem1}, we conclude that for $C_m\ltimes_{\varphi} C_{mn}$, we also have $\beta(G)=\mathsf d(G)+1,$ which is the same as the abelian group case.

\bigskip

{\sl Proof of Theorem \ref{theorem4}}.
Assume that $p_1<\cdots< p_n$ are all the distinct prime divisors of $|G|$. By Lemma \ref{nilpotent}, we have $G\cong \prod_{i=1}^n G_{p_i}$, where $G_{p_i}$ is the Sylow $p_i$-subgroup of $G$.
We distinguish two cases:

(1) If there exists $j$ such that $1\le j\le n$ and $G_{p_j}$ has a subgroup $H\cong C_{p_j}\times C_{p_j}$.

By Lemma \ref{Dk} and \ref{lemma8}(1), we have \begin{align*}
\beta(G,V)&\le\beta_{[G:H]}(H,V)=\beta_{\frac{|G|}{p_j^2}}(C_{p_j}\times C_{p_j},V)\\
&\le \beta_{\frac{|G|}{p_j^2}}(C_{p_j}\times C_{p_j})=\frac{|G|}{p_j}+p_j-1\le\frac{|G|}{p_1}+p_1-1,
\end{align*}
where the last step is just a simple calculation, and we are done.

(2) If $G_{p_j}$ has no subgroup $H\cong C_{p_j}\times C_{p_j}$ for any $j$ with $1\le j\le n$.

Because $G$ is non-cyclic, by Lemma \ref{pgroup} we must have $p_1=2$ and $G_{p_1}$ is a dihedral, semi-dihedral, or generalized quaternion group. In this case, by Lemma 1.4 in \cite{Ber} again $G_{p_1}$ contains a cyclic subgroup $C$ of index 2. But then $C\times G_{p_2}\times\cdots\times G_{p_n}$ is itself a cyclic subgroup of index 2 in $G$. By Theorem 10.3 in \cite{CD3}, we have for such groups $\beta(G)=\frac{1}{2}|G|+1=\frac{1}{2}|G|+2-1$, unless $G$ is the dicyclic group, in which case $\beta(G)=\frac{1}{2}|G|+2$. This proves the theorem.
\qed

\section{Concluding remarks}

A conjecture attributed to Pawale \cite{W} stated that $\beta(C_q\ltimes_{\varphi} C_p)=p+q-1$, where $p,q$ are primes such that $q|p-1$. This conjecture has been studied in \cite{C1}, \cite{CD2}, \cite{CD3} and \cite{DH}. In \cite[Example 5.2]{CDG}, the authors determined that $\beta(C_q\ltimes_{\varphi} C_{pq})=pq+q-1$. Based on our result, Theorem \ref{theorem3}, we have the following conjecture.
\begin{conjecture}
For any positive integer $n,m$
$$\beta(C_n\ltimes_{\varphi} C_m)=n+m-1.$$
\end{conjecture}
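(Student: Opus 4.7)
The plan is to prove the lower and upper bounds $\beta(G) \geq n+m-1$ and $\beta(G) \leq n+m-1$ separately for $G = C_n \ltimes_\varphi C_m$, with the former being routine and the latter constituting the main task.

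For the lower bound, I would apply Lemma \ref{lemma8}(3) to the normal cyclic subgroup $H = C_m$ of $G$, whose quotient $G/H \cong C_n$ is abelian. Noether's bound gives $\beta(C_m) = m$ and the classical Davenport constant is $\mathsf D(C_n) = n$, so
$$\beta(G) \geq \beta(C_m) + \mathsf D(G/H) - 1 = m + n - 1.$$

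For the upper bound, my plan would be to mirror the reduction strategy of Theorem \ref{theorem3} in a stronger inductive form, proving simultaneously for all $k \geq 1$ the generalized bound
$$\beta_k(C_n \ltimes_\varphi C_m) \leq k\cdot\max(n, m) + \min(n, m) - 1.$$
This interpolates between Lemma \ref{lemma9} (the case $n = m$), the cyclic case ($n = 1$, giving $\beta_k(C_m) = \mathsf D_k(C_m) = km$), and Theorem \ref{theorem3} (the case $n \mid m$), and reduces at $k = 1$ to the conjectured formula. In the divisibility subcase $n \mid m$, I would choose the subgroup $H = C_n \ltimes_{\varphi'} C_n \leq G$, where the second $C_n$ is the unique subgroup of order $n$ in $C_m$ (characteristic in $C_m$, hence normal in $G$), so $[G:H] = m/n$; Lemma \ref{lemma8}(1) combined with Lemma \ref{lemma9} then yields
$$\beta_k(G, V) \leq \beta_{km/n}(H, V) \leq (km/n)\cdot n + n - 1 = km + n - 1,$$
and a symmetric argument with $H = C_m \ltimes C_m$ covers the case $m \mid n$.

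The main obstacle will be the case where neither $n \mid m$ nor $m \mid n$, which includes the coprime Frobenius subcase $\gcd(n, m) = 1$, since no natural subgroup of smaller semidirect-product type is then available, and the naive reduction via Lemma \ref{lemma8}(2) through a characteristic subgroup $C_{m'} \leq C_m$ loses too much: the cyclic factor contributes $\beta_j(C_{m'}) = jm'$ multiplicatively against the recursive bound, whereas the desired bound grows only linearly in $k$. Overcoming this will likely require a direct invariant-theoretic argument: express $\mathbb{F}[V]^G = (\mathbb{F}[V]^{C_m})^{C_n}$, and exploit the fact that for non-trivial $\varphi$ a large fraction of $C_n$-orbits on a homogeneous generating set of $\mathbb{F}[V]^{C_m}$ are free, so that $G$-invariants can be produced by averaging (transfer) without incurring an extra factor of $n$ in the degree bound beyond $n + m - 1$.
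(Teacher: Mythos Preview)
The statement you are attempting to prove is a \emph{conjecture} in the paper; the authors do not prove it and indeed present it as an open problem generalizing both their Theorem~\ref{theorem3} and Pawale's conjecture $\beta(C_q\ltimes_\varphi C_p)=p+q-1$ for primes $q\mid p-1$, which the paper also treats as unresolved. There is therefore no ``paper's own proof'' to compare against.

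That said, your proposal is a reasonable outline of what is known and where the difficulty lies. The lower bound via Lemma~\ref{lemma8}(3) with $H=C_m$ is correct and is exactly the argument the paper uses in the proof of Theorem~\ref{theorem3}. Your treatment of the divisibility cases is also sound: when $n\mid m$ the subgroup $\langle x,\,y^{m/n}\rangle\cong C_n\ltimes_{\varphi'} C_n$ has index $m/n$, and Lemma~\ref{lemma8}(1) combined with Lemma~\ref{lemma9} gives $\beta(G)\le m+n-1$, recovering precisely the paper's Theorem~\ref{theorem3}; the case $m\mid n$ is handled symmetrically via $\langle x^{n/m},\,y\rangle$.

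The genuine gap is the non-divisibility case, and you have correctly identified it rather than hidden it. However, your proposed remedy---exploiting free $C_n$-orbits on generators of $\mathbb{F}[V]^{C_m}$ and bounding the transfer degree---is not a proof but a heuristic. The special case $\gcd(n,m)=1$ with $n,m$ prime is exactly Pawale's conjecture, which has resisted attack for years despite work in \cite{C1,CD2,CD3,DH}; a soft orbit-counting argument of the type you sketch would not suffice, since the difficulty is precisely that relations among $C_m$-invariants under the $C_n$-action can force indecomposable $G$-invariants of degree exceeding $m$ in subtle ways. In short: your plan proves nothing beyond what the paper already establishes, and the remaining case is the entire content of the conjecture.
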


\subsection*{Acknowledgment}
The authors are very greatful to the referee for providing many valuable suggestions and pointing out some mistakes in a previous manuscript, in particular the current version of Theorem \ref{theorem4} is due to the referee's correction. The authors are also very greatful to Prof. Alfred Geroldinger for many helpful comments and corrections. A part of this work was done during a visit by H.B. Zhang to the University of Graz, he would like to show his gratitude to the host institution for very kind hospitality. D.C. Han was supported by the National Science Foundation of China Grant No.11601448 and the Fundamental Research Funds for the Central Universities Grant No.2682016CX121. H.B. Zhang was supported by the National Science Foundation of China Grant No.11671218 and China Postdoctoral Science Foundation Grant No. 2017M620936. At last, the authors want to thank their advisor Prof. Weidong Gao for continuous encouragement and support.

\bibliographystyle{amsplain}

\end{document}